\newcommand{\dd}{\mathop{}\!\mathrm{d}}
\newcommand{\R}{\mathbb{R}}
\theoremstyle{plain}
\newtheorem{theorem}{Theorem}[section]
\newtheorem{proposition}[theorem]{Proposition}
\newtheorem{remark}[theorem]{Remark}
\numberwithin{equation}{section}
\numberwithin{table}{section}
\numberwithin{figure}{section}
\title{Piecewise orthogonal collocation for computing periodic solutions of coupled delay equations}
\author{
Alessia And\`o$^{1,2,3}$ and Dimitri Breda$^{2,4}$\\[.5em]
\small $^{1}$Area of Mathematics, Gran Sasso Science Institute\\[-.2em]
\small via Francesco Crispi 7, 67100 L'Aquila, Italy\\[.5em]
\small $^{2}$CDLab -- Computational Dynamics Laboratory\\[-.2em]
\small Department of Mathematics, Computer Science and Physics -- University of Udine\\[-.2em]
\small via delle scienze 206, 33100 Udine, Italy\\[.5em]
\small $^{3}$\texttt{alessia.ando@gssi.it}\\[.5em]
\small $^{4}$\texttt{dimitri.breda@uniud.it}
}
\date{\today}
\begin{document}
\clearpage
\maketitle
\thispagestyle{empty}

\begin{abstract}
We extend the piecewise orthogonal collocation method to computing periodic solutions of coupled renewal and delay differential equations. Through a rigorous error analysis, we prove convergence of the relevant finite-element method and provide a theoretical estimate of the error. We conclude with some numerical experiments to further support the theoretical results.\end{abstract}

\smallskip
\noindent {\bf{Keywords:}} renewal equations, delay differential equations, periodic solutions, boundary value problems, piecewise orthogonal collocation, finite element method, population dynamics

%%\pacs[JEL Classification]{D8, H51}

\smallskip
\noindent{\bf{2010 Mathematics Subject Classification:}} 65L03, 65L10, 65L20, 65L60, 92D25

\section{Introduction}\label{s_introduction}

Phenomena such as incubation periods in infectious diseases and maturation in population dynamics clearly indicate that natural processes do not happen instantaneously. In other words, there are delays between actions and their consequences, which is what leads to the introduction of delay equations, now widely used in various areas of mathematical modeling such as pharmacokinetics and behavioral epidemiology \cite{md86,dms07,krz19}.
%Including delays within models is often a sound way to describe the relevant phenomena more realistically. Indeed, in various fields of science - such as population dynamics or epidemiology - delays between a cause and the corresponding effects appear rather naturally, which brings the need to resort to delay equations in order to capture adequately the dependence on the past \cite{md86,smith11}.
%

In all the aforementioned contexts, as well as in many others \cite{ern09}, the focus of the numerical investigations is not limited to the time integration of initial-value problems associated to the equations, but rather extends to the analysis of their long-term dynamics. This includes as measure targets the detection and computation of equilibria and periodic solutions, possibly in view of performing a bifurcation analysis.
As far as delay equations are concerned, the piecewise orthogonal collocation method for computing periodic solutions was first introduced in \cite{elir00} for Retarded Functional Differential Equations (RFDEs), although a theoretical convergence analysis was obtained much later, in \cite{ab20mas}, following the approach in \cite{mas15NM} devoted to abstract Boundary Value Problems (BVPs). The extension of the method to Renewal Equations (REs) has been first applied in \cite{bdls16}, later formally described in \cite{aa20} and \cite{ab19} and finally proved to be convergent in \cite{ab22RE} and \cite{adena21}.
%In many applications, the main interest is towards the dynamical analysis of the  concerned models, including the computation of invariant sets (such as equilibria and periodic solutions) and the study of their asymptotic stability. Regarding periodic solutions, the piecewise orthogonal collocation method to compute those of Renewal Equations (REs) has been first applied in \cite{bdls16}, although the method is not even described therein. Indeed, a formal description appeared first in \cite{aa20} and \cite{ab19} where its validity is only shown by means of some numerical experiments.
%

In population dynamics, RFDEs and REs frequently appear in the form of coupled systems (\cite{dgmnr10} is a prominent example). This is what motivates the need to extend the piecewise orthogonal collocation method to such systems and, correspondingly, prove its convergence theoretically, which constitutes indeed the purpose of the present paper. Despite the possibility to exploit and combine the results already obtained for RFDEs and REs separately, new obstacles arise in the coupling from the fact that RFDEs and REs are typically defined on spaces having different regularity. Moreover, a major role is played by the structural difference between the two kinds of equations: while the right-hand side of a RFDE prescribes the current-time value of the
derivative of the unknown function, that of a RE directly prescribes the current-time value of the unknown function.
%The aim of the present paper is to give a detailed illustration of the method and provide a rigorous convergence analysis, having in mind the vast presence of REs in the field of population dynamics \cite{bdgpv12,feller1941,iannelli94,lotka1939}. The convergence analysis follows the main ideas used in \cite{ab20mas} for Retarded Functional Differential Equations (RFDEs), which is in turn based on the abstract approach in \cite{mas15NM}. \cite{ab20mas} represents indeed the solution of the long-standing problem of lack of a rigorous proof of convergence for RFDEs. The contribution of the present paper consists in extending the piecewise collocation and its convergence to REs, by tackling the nontrivial challenges due to the inherent differences concerning the class of equations and the relevant spaces of functions. While some of such challenges are mostly technical, the more consistent ones bring the necessity to resort to the theory of resolvents for Volterra Integral Equations (VIEs, see Appendix \ref{s_appendix}).
%

The contribution of the present work is twofold. On the one hand,
to the best of our knowledge, this is the first numerical tool to
compute periodic solutions of coupled delay equations properly as
solutions of BVPs. As such, we hope that it will help in fostering the
study of physiologically-structured populations\footnote{As far as
initial value problems for the latter class are concerned, let us
mention the {\it escalator boxcar train} approach \cite{ebt88} and also the works \cite{aalm22fin,aalm22inf}.}. On the other hand, as for the convergence of the method, the main result we could prove confirms the reasonable expectation of an error decaying as $O(L^{-m})$ for a piecewise polynomial of degree $m$ defined on $L$ mesh intervals and under sufficient regularity of the right-hand sides of the coupled system {(which are not restrictive for common models)}. In particular, this result refers to the {\it finite element} approach, i.e., $m$ fixed while $L\to\infty$.

Details on the equations of interest in this paper will be provided in the following Subsection \ref{s_introre}, together with a remark on the standard formulation of the problem of computing periodic solutions as a BVP. Then, the description of the piecewise orthogonal collocation method for coupled systems will be given in detail in Section \ref{s_piecewise}. In Section \ref{s_convergence} we carry out the actual convergence analysis, while emphasizing its connection to the abstract approach in \cite{mas15NM}, as well as the new aspects to take into account with respect to the separate cases of only RFDEs \cite{ab20mas} or only REs \cite{ab22RE}. The most technical (yet demanding) part of the proof is moved to \ref{s_appendix}. Eventually, Section \ref{s_results} shows some experimental results confirming the theoretical findings. Python demos are freely available at \url{http://cdlab.uniud.it/software}.
%We conclude this introduction with Subsection \ref{s_introre}, where we describe the equations of interest and the standard way to formulate the problem of computing periodic solutions as a Boundary Value Problem (BVP). The rest of the paper is divided into three main sections. Section \ref{s_piecewise} describes the piecewise orthogonal collocation method for REs. Section \ref{s_convergence} deals with the theoretical convergence of the method by illustrating how the relevant analysis can be based on the abstract approach in \cite{mas15NM}. An important part of the proof is collected in Appendix \ref{s_appendix}, in order to not interrupt the reading flow. Finally, Section \ref{s_results} shows the results of some numerical experiments on REs from population dynamics, also in view of bifurcation analysis. Python demos are freely available at \url{http://cdlab.uniud.it/software}.

\subsection{Coupled systems and boundary value problems}
\label{s_introre}
In its most general form, a coupled RE/RFDE system can be written as
\begin{equation}\label{eq:coupled}
\left\{\setlength\arraycolsep{0.1em}\begin{array}{rcll}
x(t) &=& F(x_t,y_t), \\[1mm]
y'(t) &=& G(x_t,y_t),
\end{array}
\right.
\end{equation}
where, for a positive integer $d$, $F,G:\mathtt{X}\times\mathtt{Y}\rightarrow\mathbb{R}^{d}$ are autonomous, in general nonlinear and the \emph{state spaces} $\mathtt{X},\mathtt{Y}$ are sets of functions from $[-\tau,0]$ to $\mathbb{R}^{d}$ for some $\tau>0$, called the \emph{delay}\footnote{We assume for simplicity to work with the same number $d$ of REs and RFDEs. A generalization to $d_X$ REs and $d_Y$ RFDEs is straightforward and we prefer not to overload the notation.}. The \emph{state} at time $t$ of the dynamical system on  $\mathtt{X}\times\mathtt{Y}$ generated by \eqref{eq:coupled} is $(x_{t},y_{t})$, with $x_{t}$ defined as $x_{t}(\theta):=x(t+\theta),\,\theta\in[-\tau,0]$, and $y_{t}$ defined similarly\footnote{For the well-posedness of the initial value problem for \eqref{eq:coupled} see \cite{dgg07}.}.
In particular, we develop our analysis for $\mathtt{X}:=B^{\infty}([-\tau, 0],\mathbb{R}^d)$, where $B^{\infty}$ denotes bounded and measurable functions, and $\mathtt{Y}:=B^{1,\infty}([-\tau, 0],\mathbb{R}^d)$, where $B^{1,\infty}$ denotes continuous functions having derivatives in $B^{\infty}$. The elements of $B^{\infty}$ are considered as functions, not as classes of functions that are equal almost everywhere. Such choice, instead of the classical $L^1([-\tau,0],\mathbb{R}^d)$ \cite{diekmann95}, is justified by the need of evaluating the functions pointwise in order to deal with collocation.
A periodic solution of \eqref{eq:coupled} with period $\omega>0$  \footnote{We use the letter $\omega$ to indicate the period, following the notation of \cite{hvl93}.}, if there is any, can be obtained by solving a BVP where the solutions are considered over just one period and the periodicity is imposed to the solution values at the extrema of the period.
Note that this requires to evaluate $x$ and $y$ at points that fall off the interval $[0,\omega]$, due to the delay. In order to deal with this issue, one can exploit the periodicity to bring back the evaluation to the domain $[0,\omega]$, which, assuming $\omega\geq \tau$\footnote{A solution with period $\omega$ is also a solution with period $k\omega$ for any positive integer $k$. Moreover, in the hyperbolic case, the stability of the relevant orbit is preserved and $t+\theta\geq -\omega$ holds for all $t\in[0,\omega]$.\label{tauleqomega}}, means defining the function $\overline{x}_t\in \mathtt{X}$ as

\begin{equation}\label{perstate}
\overline{x}_t(\theta):=\left\{\setlength\arraycolsep{0.1em}\begin{array}{ll}
x(t+\theta),  &\quad t+\theta\in[0,\omega], \\[1mm]
x(t+\theta+\omega), &\quad t+\theta\in[-\omega,0), \\[1mm]
\end{array}
\right.
\end{equation}
and $\overline{y}_t\in\mathtt{Y}$ similarly. The periodic BVP can then be formulated as
\begin{equation}\label{barBVPcoupled}
\left\{\setlength\arraycolsep{0.1em}\begin{array}{rcll}
x(t) &=& F(\overline{x}_t,\overline{y}_t),&\quad t\in[0,\omega], \\[1mm]
y'(t) &=& G(\overline{x}_t,\overline{y}_t),&\quad t\in[0,\omega], \\[1mm]
x(0) &=& x(\omega)&\\[1mm]
y(0) &=& y(\omega)&\\[1mm]
p(x,y)&=&0,&
\end{array}
\right.
\end{equation} 
where $p$ is a scalar (usually linear) function defining a so-called \emph{phase condition}, necessary to remove translational invariance \cite{doe07}. This is the most natural BVP formulation of the problem following the case of RFDEs, e.g., \cite{bad85,bkw06,bz84,elir00,liu94,mas15I,rt74}, as well as the most convenient to consider when developing the numerical method we propose here. However, we will introduce in Section \ref{s_convergence} a slightly different, yet equivalent, formulation in view of the convergence analysis based on \cite{mas15NM}.
%, necessary to remove translational invariance. An example of phase condition is the \emph{trivial} one, of the form { $x(0)_k=\hat{x}$} for some $k\in\{1,\ldots,d\}$, where $\hat{x}$ is fixed. Otherwise, an \textit{integral} phase condition is of the form
%\begin{equation*}
%\int_0^{\omega} \langle x(t), \tilde{x}'(t)\rangle \dd{t}=0,
%\end{equation*}
%where $\tilde{x}$ is a given reference solution \cite{doe07}.

\bigskip
\noindent In realistic models, such as those describing structured populations \cite{md86}, $F$ and $G$ have usually the form
\begin{equation}\label{hpcoupled1}
F(\varphi,\psi)=\tilde F(\tilde K(\varphi,\psi),\psi), \qquad G(\varphi,\psi)=\tilde G(\tilde H(\varphi,\psi),\psi)
\end{equation}
for some functions $\tilde F,\tilde G:\mathbb{R}^d\times\mathtt{Y}\to\mathbb{R}^d$, where, in turn,
\begin{equation}\label{hpcoupled2}
\tilde K(\varphi,\psi)=\int_{-\tau}^0K(\sigma,\varphi(\sigma),\psi(\sigma))\dd\sigma,\quad \tilde H(\varphi,\psi)=\int_{-\tau}^0H(\sigma,\varphi(\sigma),\psi(\sigma))\dd\sigma
\end{equation}
for some integration kernels $K,H:[-\tau,0]\times\mathbb{R}^{2d}\to\mathbb{R}^d$.

%-----------------------------------------------------------------------------
\section{Piecewise orthogonal collocation}
\label{s_piecewise}
This section describes the numerical method used to compute periodic solutions of \eqref{eq:coupled}, starting from general right-hand sides $F,G$ as in \eqref{hpcoupled1} which, for the moment, are assumed to be computable without resorting to any further numerical approximation.

\bigskip
\noindent Since the period $\omega$ is unknown, it is numerically convenient (see, e.g., \cite{elir00}) to reformulate \eqref{barBVPcoupled} through the map $s_{\omega}:\mathbb{R}\to\mathbb{R}$ defined by $s_{\omega}(t):=t/\omega$. \eqref{barBVPcoupled} is thus equivalent to
\begin{equation}\label{rescaledBVPcoupled}
\left\{\setlength\arraycolsep{0.1em}\begin{array}{rcll}
x(t) &=& F(\overline{x}_t\circ s_{\omega},\overline{y}_t\circ s_{\omega}),&\quad t\in[0,1], \\[1mm]
y'(t) &=& \omega G(\overline{x}_t\circ s_{\omega},\overline{y}_t\circ s_{\omega}),&\quad t\in[0,1], \\[1mm]
x(0) &=& x(1)&\\[1mm]
y(0) &=& y(1)&\\[1mm]
p(x,y)&=&0,&
\end{array}
\right.
\end{equation}
the solution of which is intended to be defined in $[0,1]$.

\bigskip
\eqref{rescaledBVPcoupled} can be solved numerically through \emph{piecewise} orthogonal collocation \cite{elir00}. This is particularly useful when adaptive meshes might be needed to better follow the solution profile, and is now a standard approach (originally developed for ODEs \cite{amr88}, see \texttt{MatCont} \cite{matcont}). It means looking for $m$-degree piecewise continuous polynomials $\mu,\nu$ in $[0,1]$ and $w\in\mathbb{R}$ that satisfy the system
\begin{equation*}
\left\{\setlength\arraycolsep{0.1em}\begin{array}{rcll}
\mu(t_{i,j}) &=& F(\overline{\mu}_{t_{i,j}}\circ s_w,\overline{\nu}_{t_{i,j}}\circ s_w),&\quad j\in\{1,\ldots,m\},\quad i\in\{1,\ldots,L\}, \\[1mm]
\nu'(t_{i,j}) &=& \omega G(\overline{\mu}_{t_{i,j}}\circ s_w,\overline{\nu}_{t_{i,j}}\circ s_w),&\quad j\in\{1,\ldots,m\},\quad i\in\{1,\ldots,L\}, \\[1mm]
\mu(0) &=&\mu(1),&\\[1mm]
\nu(0) &=&\nu(1),&\\[1mm]
p(\mu,\nu)&=&0&
\end{array}
\right.
\end{equation*}
 having dimension $2d(1+Lm)+1$, for a given mesh $0= t_0<\cdots<t_L=1$ and collocation points $\{t_{i,j}\}_{i,j}$ such that $t_{i-1}  < t_{i,1}<\cdots<t_{i,m} < t_{i}$ for all $i\in\{1,\ldots,L\}$. The unknowns are, other than $w$, those of the form $\mu_{i,j}:=\mu(t_{i,j})$ and $\nu_{i,j}:=\nu(t_{i,j})$ for $(i,j)=(1,0)$ and $i\in\{1,\ldots,L\}$, $j\in\{1,\ldots,m\}$\footnote{In fact, one could also consider to represent $\mu$ and $\nu$ through their values at other sets of nodes as unknowns, not necessarily $\{t_{i,j}\}_{i,j}$ (see Remark \ref{r_representation} at the end of Section \ref{s_convergence}).}.
\begin{remark}
Typically, periodic solutions are computed within a continuation framework \cite{doe07}.  This provides a reasonable choice for the phase condition, necessary to ensure the actual well-posedness of \eqref{rescaledBVPcoupled}.
%In that case, concerning the choice of a suitable phase condition $p$, either some $\hat{x}$ or some $\tilde x$, defined as in Section \ref{s_introduction}, is available: indeed, if the continuation starts close to a Hopf bifurcation (see, e.g., \cite[Sections 3.4 and 3.5]{kuz98}),  a coordinate of the equilibrium giving rise to it is a natural choice for $\hat x$. Alternatively, one could use an integral phase condition where $\tilde x$ is a cycle with period $2\pi/\beta$ and $\beta$ is in turn (the absolute value of) the imaginary part of the conjugate pair determining the Hopf bifurcation. This cycle is intended to represent an approximation of a periodic solution corresponding to a value of the parameter obtained by slightly perturbing the Hopf one, and a reasonable guess for the amplitude is given by $\sqrt{\alpha}$, where $\alpha$ is the real part of the aforementioned conjugate pair at the perturbed value of the parameter. On the other hand, at the subsequent continuation steps, $\tilde x$ can be defined as a component of the periodic solution computed at the previous continuation step.
\end{remark}
\bigskip
As mentioned at the end of Subsection \ref{s_introre}, in applications from population dynamics right-hand sides usually feature an integral, therefore they cannot be exactly computed in general. This is also the case of \eqref{hpcoupled2}, which reads, once the time has been rescaled,
\begin{equation}\label{hprescaled}
\setlength\arraycolsep{0.1em}\begin{array}{rcl}
\tilde K(x_t\circ s_{\omega},y_t\circ s_{\omega})&=&\displaystyle\int_{-\frac{\tau}{\omega}}^0\omega K(\omega\theta,x_t(s_{\omega}(\omega\theta)),y_t(s_{\omega}(\omega\theta)))\dd\theta\\[4mm]
&=&\omega\displaystyle\int_{-\frac{\tau}{\omega}}^0 K(\omega\theta,x(t+\theta),y(t+\theta))\dd\theta,
\end{array}
\end{equation}
and the same holds for $\tilde H$, where now $x_t\in X:=B^{\infty}([-1,0],\mathbb{R}^d)$ and $y_t\in X:=B^{1,\infty}([-1,0],\mathbb{R}^d)$. Observe that, although the corresponding natural state space is in fact a Banach space of functions defined in $[-\tau/\omega,0]$, one could choose spaces of functions defined in $[-r,0]$ for any $1\geq r\geq\tau/\omega$\footnote{The extension of the state space to $[-1,0]$ is necessary since $\omega$ varies within an iterative procedure to find a numerical solution while the space needs to be fixed, as is required for the forthcoming analysis. Observe that such extension does not affect the dynamics: indeed, initial states that only differ in $[-1,-\tau/\omega]$ lead to the same orbits, but different orbits cannot cross.}.

\bigskip
Assuming that $K,H$ can be exactly computed, which is usually the case in applications, the approximation of \eqref{hprescaled} through quadrature as
\begin{equation}\label{quadrulek}
\tilde K_M(x_t\circ s_{\omega},y_t\circ s_{\omega}):=\omega \sum_{i=0}^Mw_i K(\omega\eta_i,x(t+\eta_i),y(t+\eta_i)), 
\end{equation}
and $\tilde H_M$ defined similarly, where $M$ is a given approximation level and $-\tau/\omega\leq\eta_0<\cdots<\eta_M\leq 0$, can also be exactly computed. Such an approximation corresponds to the \emph{secondary discretization} introduced in Subsection \ref{sub_numerical} and used in the convergence analysis that follows. The nodes $\eta_0,\ldots,\eta_M$ and the corresponding weights $w_0,\ldots,w_M$ are meant to define a suitable quadrature formula by exploiting possible irregularities in $K$, meaning that their choice does not need to be made a priori. Moreover, note that the quadrature nodes vary together with $\omega$, since the latter is unknown. In particular, they are completely independent of the collocation nodes mentioned earlier.
\section{Convergence analysis}
\label{s_convergence}
%-----------------------------------------------------------------------------
This section describes the theoretical convergence analysis of the numerical method described in Section \ref{s_piecewise}, following the abstract approach \cite{mas15NM}. In particular, the convergence analysis that follows applies to the Finite Element Method (FEM), which consists in letting $L\to\infty$ while keeping $m$ fixed and is the classical approach considered in practical applications (e.g., in \texttt{MatCont} \cite{matcont} or some versions of \texttt{DDE-Biftool} \cite{ddebiftool,sie15}).

\bigskip
\noindent Following the approach for RFDEs in \cite{ab20mas}, we first reformulate \eqref{rescaledBVPcoupled} as
\begin{equation}\label{convBVPcoupled}
\left\{\setlength\arraycolsep{0.1em}\begin{array}{rcll}
x(t) &=& F(x_t\circ s_{\omega},y_t\circ s_{\omega}),&\quad t\in[0,1], \\[1mm]
y'(t) &=& \omega G(x_t\circ s_{\omega},y_t\circ s_{\omega}),&\quad t\in[0,1], \\[1mm]
x_0 &=& x_1&\\[1mm]
y_0 &=& y_1&\\[1mm]
p(x\vert_{[0,1]})&=&0,&
\end{array}
\right.
\end{equation}
i.e., by imposing the periodicity condition to the states at the extrema of the period rather than to the solution values. In this case the solution $(x,y)$ is intended as a map defined in $[-1,1]$ and there is no need to resort to \eqref{perstate}.

\bigskip
Although formulations \eqref{rescaledBVPcoupled} and \eqref{convBVPcoupled} are formally different, they are mathematically equivalent and also lead to fundamentally equivalent numerical methods. Indeed, when applying the numerical method described in Section \ref{s_piecewise} to the problem \eqref{convBVPcoupled} one just introduces redundant variables.\footnote{A convergence analysis based on \eqref{rescaledBVPcoupled} can still be obtained for REs only by following the structure of the one presented here, see \cite{adena21}. For RFDEs, instead, this does not seem possible \cite{ab20mas}.}

\bigskip
The second step consists in observing that \eqref{convBVPcoupled} fits into the general form  of the BVP addressed in \cite{mas15NM}:
\begin{equation}\label{generalBVP}
\begin{cases}
u=\mathcal{F}(\mathcal{G}(u,\alpha),u,\beta),\\
\mathcal{B}(\mathcal{G}(u,\alpha),u,\beta)=0.
\end{cases}
\end{equation}
Here the relevant solution $v:=\mathcal{G}(u,\alpha)$ is assumed to lie in a normed space $\mathbb{V}$ of functions $[-1,1]\to\mathbb{R}^{2d}$, $u$ in a Banach space $\mathbb{U}$ of functions $[0,1]\to\mathbb{R}^{2d}$, and the operator $\mathcal{G}:\mathbb{U}\times\mathbb{A}\rightarrow\mathbb{V}$ represents a linear operator which reconstructs the solution given $u$ and an initial state $\alpha$, also lying in a Banach space $\mathbb{A}$ of functions $[-1,0]\to\mathbb{R}^{2d}$. $\beta$ is a vector of parameters living in a Banach space $\mathbb{B}$. 
The first line of \eqref{generalBVP} represents the concerned functional equation via the function $\mathcal{F}:\mathbb{V}\times\mathbb{U}\times\mathbb{B}\rightarrow\mathbb{U}$ and the second represents the boundary conditions via $\mathcal{B}:\mathbb{V}\times\mathbb{U}\times\mathbb{B}\rightarrow\mathbb{A}\times\mathbb{B}$.

\bigskip
In \cite{mas15NM}, \eqref{generalBVP} is then translated into a fixed-point problem, the so-called {\it Problem in Abstract Form} (PAF) which consists in finding $(v^{\ast},\beta^{\ast})\in \mathbb{V}\times\mathbb{B}$ with $v^{\ast}:=\mathcal{G}(u^{\ast},\alpha^{\ast})$ and $(u^{\ast},\alpha^{\ast},\beta^{\ast})\in \mathbb{U}\times\mathbb{A}\times\mathbb{B}$ such that 
\begin{equation}\label{PAF}
(u^{\ast},\alpha^{\ast},\beta^{\ast})=\Phi(u^{\ast},\alpha^{\ast},\beta^{\ast})
\end{equation}
for $\Phi:\mathbb{U}\times\mathbb{A}\times\mathbb{B}\rightarrow\mathbb{U}\times\mathbb{A}\times\mathbb{B}$ given by
\begin{equation}\label{Phi}
\Phi(u,\alpha,\beta):=
\begin{pmatrix}
\mathcal{F}(\mathcal{G}(u,\alpha),u,\beta)\\[2mm]
(\alpha,\beta)-\mathcal{B}(\mathcal{G}(u,\alpha),u,\beta)
\end{pmatrix}.
\end{equation}
In the sequel we always use the superscript $^{\ast}$ to denote quantities relevant to fixed points.

\bigskip
It follows that \eqref{convBVPcoupled} leads to an instance of \eqref{Phi} with $\mathcal{G}$, $\mathcal{F}$ and $\mathcal{B}$ given respectively by
\begin{equation}\label{G2}
\mathcal{G}_{}(u,\alpha)(t):=\begin{cases}
\displaystyle \begin{pmatrix} u_X(t)\\ \alpha_Y(0)+\int_0^t u_Y(t) \end{pmatrix},&t\in(0,1],\\[2mm]
\alpha(t),&t\in[-1,0],
\end{cases}
\end{equation}
\begin{equation}\label{Fco}
\mathcal{F}(v,u,\omega):= 
\begin{pmatrix}
F(v_{(\cdot)}\circ s_{\omega})\\[2mm]
\omega G(v_{(\cdot)}\circ s_{\omega})
\end{pmatrix}.
\end{equation}
and
\begin{equation}\label{Bco}
\mathcal{B}(v,u,\omega):=
\begin{pmatrix}
v_{0}-v_{1}\\[2mm]
p(v|_{[0,1]})
\end{pmatrix},
\end{equation}
where the subscripts $X,Y$ represent the first $d$ and the last $d$ components, respectively. The boundary operator is linear and independent of $\omega$.

\bigskip
The fact that \eqref{convBVPcoupled} can be rewritten as a PAF does not imply that the convergence framework in \cite{mas15NM} can be applied either way. In fact, several assumptions are required. These include theoretical assumptions, the validity of which depends on the choices of the spaces, as well as on the regularity of the integrands $K,H$ in \eqref{hpcoupled2}. Subsection \ref{sub_theoretical} includes the definitions of such assumptions and their statements as propositions, instanced according to the problems of interest. The other assumptions required concern instead the reduction of the problem to a finite-dimensional one, and will be dealt with, similarly, in Subsection \ref{sub_numerical}. Concerning the proofs of such propositions,  we will go through the main points, focusing on the differences with respect to the analogous propositions in the separated cases of RFDEs \cite{ab20mas} and REs \cite{ab22RE}, with the exception of a more complicated one to which we dedicate the entire \ref{s_appendix}.
%-----------------------------------------------------------------------------
\subsection{Theoretical assumptions}
\label{sub_theoretical}
With respect to \eqref{eq:coupled}, \eqref{hpcoupled1}, \eqref{hpcoupled2} and \eqref{convBVPcoupled}, the hypotheses needed to prove the validity of the theoretical assumptions in \cite{mas15NM} are:
\begin{enumerate}[label=(T\arabic*),ref=(T\arabic*)]
\item\label{T1} $\mathtt{X}=B^{\infty}([-\tau,0],\mathbb{R}^{d})$, $\mathtt{Y}=B^{1,\infty}([-\tau,0],\mathbb{R}^{d})$,\\ $X=B^{\infty}([-1,0],\mathbb{R}^{d})$, $Y=B^{1,\infty}([-1,0],\mathbb{R}^{d})$;
\item\label{T2} $\mathbb{U}=B^{\infty}([0,1],\mathbb{R}^{2d})$, $\mathbb{V}=B^{\infty}([-1,1],\mathbb{R}^{d})\times B^{1,\infty}([-1,1],\mathbb{R}^{d})$, \\$\mathbb{A}=B^{\infty}([-1,0],\mathbb{R}^{d})\times B^{1,\infty}([-1,0],\mathbb{R}^{d})$;
\item\label{T3} $K,H$ are piecewise continuous with partial derivatives $D_1 K$, $D_2 K$, $D_1H$, $D_2H$ which are measurable with respect to both their arguments;
\item\label{T4} $\tilde F,\tilde G$ are continuous with partial derivatives $D_1\tilde F$, $D_2\tilde F$, $D_1\tilde G$, $D_2\tilde G$;
\item\label{T5} $D_1\tilde F,D_1\tilde G\in\mathcal{C}(\R^d,\R^d)$, $D_2\tilde F,D_2\tilde G\in\mathcal{C}(\mathtt{Y},\R^d)$ and the maps $x\mapsto D_2K(q,x)$ and $x\mapsto D_2H(q,x)$ are piecewise continuous for all $q\in\R$;
\item\label{T6} there exist $r>0$ and $\kappa\geq0$ such that
\begin{equation*}
\left\{\setlength\arraycolsep{0.1em}\begin{array}{rcl}
%\item $\|DK(\omega\cdot,v_t)-DK(\omega^*\cdot,v^{\ast}_{t})\|_{\mathbb{R}^{d}\leftarrow X\times Y\times\R}\leq\kappa\|(v_t,\omega)-(v^{\ast}_{t}, \omega^{\ast})\|_{X\times Y\times\R}$
%\item $\|DH(\omega\cdot,v_t)-DH(\omega^*\cdot,v^{\ast}_{t})\|_{\mathbb{R}^{d}\leftarrow X\times Y\times\R}\leq\kappa\|(v_t,\omega)-(v^{\ast}_{t}, \omega^{\ast})\|_{X\times Y\times\R}$
\|D\tilde F(\tilde K(v_t\circ s_{\omega}), w_t\circ s_{\omega})&-&D\tilde F(\tilde K(v^*_t\circ s_{\omega^*}), w^*_t\circ s_{\omega^*})\|_{\R^d\leftarrow \R^d\times\mathtt{Y}}\\
&\leq &\kappa\|(v_t,\omega)-(v^{\ast}_{t}, \omega^{\ast})\|_{X\times Y\times\R}\\
\|D\tilde G(\tilde H(v_t\circ s_{\omega}), w_t\circ s_{\omega})&-&D\tilde G(\tilde H(v^*_t\circ s_{\omega^*}), w^*_t\circ s_{\omega^*})\|_{\R^d\leftarrow \R^d\times\mathtt{Y}}\\
&\leq& \kappa\|(v_t,\omega)-(v^{\ast}_{t}, \omega^{\ast})\|_{X\times Y\times\R},
\end{array}\right.
\end{equation*}
for every $(v_t,\omega)\in b((v^{\ast}_{t},\omega^{\ast}),r)$\footnote{$b(c,r)$ denotes the closed ball centered in $c$ having radius $r$.}, uniformly with respect to $t\in[0,1]$, where $w$ indicates the components of $v$ obtained from the RFDE.
\end{enumerate}

\bigskip
The first theoretical assumption (A$\mathfrak{F}\mathfrak{B}$, \cite[page 534]{mas15NM}) concerns the Fr\'echet-differen\-tiability of the operators $\mathcal{F}$ and $\mathcal{B}$ appearing in \eqref{Phi}. Since $p$ is linear, so is $\mathcal{B}$ in \eqref{Bco}, hence the latter is Fr\'echet-differentiable. The validity of the assumption is thus a direct consequence of the following.
\begin{proposition}\label{p_Afbco}
Under \ref{T1}-\ref{T4}, $\mathcal{F}$ in \eqref{Fco} is Fr\'echet-differentiable, from the right with respect to $\omega$, at every $(\hat v,\hat u,\hat\omega)\in\mathbb{V}\times\mathbb{U}\times(0,+\infty)$, and the first component of $D\mathcal{F}(\hat v,\hat u,\hat\omega)(v,u,\omega)$ is given by
\begin{equation}\label{DFco}
D\mathcal{F}_X(\hat v,\hat u,\hat\omega)(v,u,\omega):=\mathfrak{L}_X(\cdot;\hat v,\hat\omega)[v_{(\cdot)}\circ s_{\hat\omega}]+\omega\mathfrak{M}_X(\cdot;\hat v,\hat\omega)
\end{equation}
for $(v,u,\omega)\in\mathbb{V}\times\mathbb{U}\times(0,+\infty)$, where, for $t\in[0,1]$,
\begin{equation}\label{Lco1}
\setlength\arraycolsep{0.1em}\begin{array}{rl}
\mathfrak{L}_X(t;\hat v,\hat\omega)[v_{t}\circ s_{\hat\omega}]:=& D_1\tilde F(\tilde K(\hat v_{t}\circ s_{\hat\omega}),\hat w_t\circ s_{\hat\omega})L_X(t;\hat v,\hat\omega)[v_{t}\circ s_{\hat\omega}]\\[2mm]
+&\displaystyle D_2\tilde F(\tilde K(\hat v_{t}\circ s_{\hat\omega}),\hat w_t\circ s_{\hat\omega})[ w_t\circ s_{\hat\omega}]
\end{array}
\end{equation}
and
\begin{equation}\label{Mco1}
\setlength\arraycolsep{0.1em}\begin{array}{rl}
\mathfrak{M}_X(t;v,\omega):=&D_1\tilde F(\tilde K(v_t\circ s_{\omega}),w_t\circ s_{\omega})M_X(t;v,\omega)\\[2mm]
-&D_2\tilde F(\tilde K(v_{t}\circ s_{\omega}), w_t\circ s_{\omega})[w'_{t}\circ s_{\omega}]\cdot\frac{s_{\omega}}{\omega},
\end{array}
\end{equation}
and, in turn, 
\begin{equation*}
L_X(t;\hat v,\hat\omega)[v_t\circ s_{\hat\omega}]:=\hat\omega\int_{-\frac{\tau}{\hat\omega}}^0D_2K(\hat\omega\theta, \hat v(t+\theta),\hat w(t+\theta))[v(t+\theta),w(t+\theta)]\dd\theta
\end{equation*}
and 
\begin{equation*}
\setlength\arraycolsep{0.1em}\begin{array}{rl}
M_X(t;v,\omega):=&\displaystyle\int_{-\frac{\tau}{\omega}}^0 K(\omega\theta, v(t+\theta), w(t+\theta))\dd\theta\\[2mm]
-&\displaystyle\frac{\tau}{\omega} K\left(-\tau,v\left(t-\frac{\tau}{\omega}\right),w\left(t-\frac{\tau}{\omega}\right)\right)\\[2mm]
+&\displaystyle\omega\int_{-\frac{\tau}{\omega}}^0D_1K(\omega\theta,v(t+\theta),w(t+\theta))\theta\dd\theta.
\end{array}
\end{equation*}
The second component $D\mathcal{F}_Y(\hat v,\hat u,\hat\omega)(v,u,\omega)$ is defined similarly from
\begin{equation}\label{Lco2}
\setlength\arraycolsep{0.1em}\begin{array}{rl}
\mathfrak{L}_Y(t;\hat v,\hat\omega)[v_{t}\circ s_{\hat\omega}]:=\hat\omega[& D_1\tilde G(\tilde H(\hat v_{t}\circ s_{\hat\omega}),\hat w_t\circ s_{\hat\omega})L_Y(t;\hat v,\hat\omega)[v_{t}\circ s_{\hat\omega}]\\[2mm]
+&\displaystyle D_2\tilde G(\tilde H(\hat v_{t}\circ s_{\hat\omega}),\hat w_t\circ s_{\hat\omega})[w_t\circ s_{\hat\omega}]]
\end{array}
\end{equation}
and
\begin{equation}\label{Mco2}
\setlength\arraycolsep{0.1em}\begin{array}{rl}
\mathfrak{M}_Y(t;v,\omega):=G(v_{t}\circ s_{\omega})+\omega\bigg[&D_1\tilde G(\tilde H(v_t\circ s_{\omega}),w_t\circ s_{\omega})M_Y(t;v,\omega)\\[2mm]
-&D_2\tilde G(\tilde H(v_{t}\circ s_{\omega}), w_t\circ s_{\omega})[w'_{t}\circ s_{\omega}]\cdot\frac{s_{\omega}}{\omega}\bigg],
\end{array}
\end{equation}
where $L_Y$ and $M_Y$ are defined similarly to $L_X$ and $M_X$.

\end{proposition}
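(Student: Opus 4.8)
The plan is to exploit the composite structure $F=\tilde F(\tilde K(\cdot),\cdot)$ of \eqref{hpcoupled1} together with the rescaled integral representation \eqref{hprescaled}, and to differentiate the first component $\mathcal{F}_X(v,u,\omega)(t)=\tilde F(\tilde K(v_t\circ s_\omega),w_t\circ s_\omega)$ slot by slot via the chain rule. The first observation is that $\mathcal{F}$ in \eqref{Fco} does not depend on $u$, so the partial derivative in the $u$-direction vanishes identically and only the $v$- and $\omega$-directions survive. Since $D_1\tilde F$ and $D_2\tilde F$ exist and are continuous by \ref{T4}, it suffices to differentiate the two inner maps $v\mapsto\tilde K(v_t\circ s_\omega)\in\R^d$ and $v\mapsto w_t\circ s_\omega\in\mathtt{Y}$ and then contract with $D_1\tilde F$ and $D_2\tilde F$ at the base point $(\hat v,\hat\omega)$.

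In the $v$-direction the map $w\mapsto w_t\circ s_\omega$ is linear and bounded (the composition with $s_\omega$ sends $[-\tau,0]$ into $[-\tau/\omega,0]\subseteq[-1,0]$, using $\tau/\omega\le 1$), so its derivative in an increment $w$ is simply $w_t\circ s_\omega$; this produces the $D_2\tilde F$ summand of $\mathfrak{L}_X$ in \eqref{Lco1}. For $\tilde K$, I would differentiate under the integral sign in $\tilde K(v_t\circ s_\omega)=\hat\omega\int_{-\tau/\hat\omega}^0 K(\hat\omega\theta,\hat v(t+\theta),\hat w(t+\theta))\dd\theta$, employing $D_2K$ (measurable by \ref{T3}) to obtain exactly $L_X$; the interchange of differentiation and integration is licensed by dominated convergence, with a dominating bound furnished by the piecewise continuity of $D_2K$ on the compact integration interval. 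Composing with $D_1\tilde F$ yields the first summand of $\mathfrak{L}_X$.

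The delicate part, and the main obstacle, is the derivative \emph{from the right} in $\omega$, since $\omega$ enters $\tilde K$ in three distinct ways: the outer scaling factor, the moving lower limit $-\tau/\omega$, and the first kernel argument $\omega\theta$. I would differentiate the representation \eqref{hprescaled} by combining the product rule on the outer factor (first term of $M_X$), the Leibniz rule for the moving endpoint, whose derivative $\tau/\omega^2$ times the outer $\omega$ and the evaluation $K(-\tau,\cdot)$ gives the second term of $M_X$, and the $\omega$-dependence of the kernel argument through $D_1K$ (third term of $M_X$); the state values $v(t+\theta),w(t+\theta)$ carry no $\omega$-dependence, consistent with the stated form of $M_X$. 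Feeding $M_X$ into $D_1\tilde F$ accounts for the first summand of $\mathfrak{M}_X$ in \eqref{Mco1}. The second summand comes from the $\omega$-dependence of the composition $w_t\circ s_\omega$: since $s_\omega(\sigma)=\sigma/\omega$, the chain rule gives $\partial_\omega(w_t\circ s_\omega)=-(w'_t\circ s_\omega)\cdot s_\omega/\omega$, which is precisely where the regularity $\mathtt{Y}=B^{1,\infty}$ of \ref{T1} is indispensable, as it guarantees that $w'$ exists and is bounded so that this composition is differentiable; contracting with $D_2\tilde F$ produces the remaining summand. The one-sidedness in $\omega$ reflects both the need to keep the moving endpoint and the composition argument inside the fixed domain $[-1,0]$ and the fact that the bounded-measurable, merely piecewise-continuous objects $w'$ and $K$ guarantee only one-sided difference-quotient limits at their breakpoints.

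Finally I would upgrade the assembled formal derivative from Gâteaux to genuine Fréchet by bounding the second-order remainder as $o(\|(v,\omega)\|_{\mathbb{V}\times\R})$ \emph{uniformly} in $t\in[0,1]$; this uniformity follows from the continuity of $D_1\tilde F,D_2\tilde F$ together with the uniform integrability afforded by \ref{T3}-\ref{T4}, exactly along the lines of the uncoupled analyses \cite{ab22RE,ab20mas}. The component $\mathcal{F}_Y$ is treated identically, the only structural novelty being the explicit prefactor $\omega$ in $\omega G(\cdot)$: differentiating it by the product rule contributes the extra summand $G(v_t\circ s_\omega)$ in $\mathfrak{M}_Y$ of \eqref{Mco2} and the overall factor $\hat\omega$ in $\mathfrak{L}_Y$ of \eqref{Lco2}. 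The genuinely new feature relative to the separate RE \cite{ab22RE} and RFDE \cite{ab20mas} cases is the interplay, within the single map $\tilde F$, between the integral argument $\tilde K$ (whose $\omega$-differentiation needs no regularity of the RE-state $v_X$) and the RFDE-state argument $w$ (whose $\omega$-differentiation through $s_\omega$ forces $w\in B^{1,\infty}$), which is the crux of the coupling.
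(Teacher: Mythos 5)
Your proposal is correct and follows essentially the same route as the paper: the candidate expression \eqref{DFco}--\eqref{Mco2} is assembled slot by slot via the chain, product and Leibniz rules (with the $\omega$-dependence split among the outer factor, the moving endpoint $-\tau/\omega$, the kernel argument $\omega\theta$, and the composition $w_t\circ s_\omega$), and is then verified directly against the definition of (right) Fr\'echet differentiability, uniformly in $t$. You also capture the paper's key structural point, namely that \eqref{hpcoupled1}--\eqref{hpcoupled2} guarantee that the $\omega$-derivative involves only the derivative $w'$ of the $B^{1,\infty}$ component and never a derivative of the RE component, which is precisely why those hypotheses are indispensable in the coupled setting.
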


\begin{proof} 
 Basically, the expression \eqref{DFco}, defined through \eqref{Lco1} and \eqref{Mco1}, is directly proven to satisfy the definition of differentiable function according to \cite[Definition 1.1.1]{ampr95}, and the same is done for the second component of $D\mathcal{F}$. It is worth pointing out that assuming hypotheses such as \eqref{hpcoupled1} and \eqref{hpcoupled2}, which is anyway typical in applications from population dynamics, is crucial in this proposition for the RE component. Basically, for the thesis to hold it is required that the right-hand side always lies in a more regular space than $\mathbb{U}$ (which is always the case for RFDEs, where $\mathbb{U}$ plays the role of the space of the derivatives). This can be observed by looking at the last addend of \cite[(2.12)]{ab20mas}, where the derivative of the state of an element of $\mathbb{V}$ appears as a factor. The same factor would be a problem without any assumption whatsoever on $F$ and $G$ since the first component of $\mathbb{V}$ is as regular as that of $\mathbb{U}$ under \ref{T2} in the case of coupled problems.\end{proof}

\bigskip
The second theoretical assumption (A$\mathfrak{G}$, \cite[page 534]{mas15NM}) concerns the boundedness of $\mathcal{G}$ defined in \eqref{G2}. The following proposition concerns its validity, and its proof is an immediate consequence of the definition \eqref{G2}.

\begin{proposition}\label{l_G}
Under (T2), $\mathcal{G}_{}$ is bounded.
\end{proposition}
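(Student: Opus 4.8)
The plan is to show directly from the definition \eqref{G2} that $\mathcal{G}$ is a bounded linear operator from $\mathbb{U}\times\mathbb{A}$ into $\mathbb{V}$, where the relevant norms are those fixed in \ref{T2}: $\mathbb{U}=B^{\infty}([0,1],\mathbb{R}^{2d})$, $\mathbb{A}=B^{\infty}([-1,0],\mathbb{R}^{d})\times B^{1,\infty}([-1,0],\mathbb{R}^{d})$, and $\mathbb{V}=B^{\infty}([-1,1],\mathbb{R}^{d})\times B^{1,\infty}([-1,1],\mathbb{R}^{d})$. Linearity in $(u,\alpha)$ is immediate by inspection of \eqref{G2}, since each branch is an affine-free linear combination of $u_X$, $u_Y$, $\alpha$, and $\alpha_Y(0)$; so the whole content of the proposition reduces to a norm bound $\|\mathcal{G}(u,\alpha)\|_{\mathbb{V}}\leq C(\|u\|_{\mathbb{U}}+\|\alpha\|_{\mathbb{A}})$ for a constant $C$ independent of $(u,\alpha)$.

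The key steps are to estimate the two components of $v=\mathcal{G}(u,\alpha)$ separately against the two factors of $\mathbb{V}$. For the first ($X$) component, which lives in $B^{\infty}([-1,1],\mathbb{R}^{d})$, one has $v_X(t)=u_X(t)$ on $(0,1]$ and $v_X(t)=\alpha_X(t)$ on $[-1,0]$, so its sup norm is bounded by $\max\{\|u_X\|_{\infty},\|\alpha_X\|_{\infty}\}\leq\|u\|_{\mathbb{U}}+\|\alpha\|_{\mathbb{A}}$. For the second ($Y$) component, which must be controlled in the stronger norm of $B^{1,\infty}([-1,1],\mathbb{R}^{d})$, I would treat the function value and its derivative. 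On $(0,1]$ the value is $\alpha_Y(0)+\int_0^t u_Y$, whose sup norm is at most $|\alpha_Y(0)|+\|u_Y\|_{\infty}\leq\|\alpha\|_{\mathbb{A}}+\|u\|_{\mathbb{U}}$; on $[-1,0]$ it equals $\alpha_Y$, bounded by $\|\alpha\|_{\mathbb{A}}$. For the derivative, on $(0,1]$ the fundamental theorem of calculus gives $v_Y'=u_Y$, so $\|v_Y'\|_{\infty}\leq\|u\|_{\mathbb{U}}$, while on $[-1,0]$ it equals $\alpha_Y'$, bounded by $\|\alpha\|_{\mathbb{A}}$ (recall the $B^{1,\infty}$ norm of $\alpha_Y$ controls $\alpha_Y'$). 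Collecting these estimates yields the desired bound with an absolute constant.

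The one point deserving genuine care — and the only place where the argument is not purely mechanical — is the matching of the two branches at $t=0$, which is needed for $v_Y$ to actually belong to $B^{1,\infty}([-1,1])$, i.e.\ to be continuous with a derivative in $B^{\infty}$, rather than merely to have finite sup norms on each piece. I would verify that the value branches agree at $t=0$ (both give $\alpha_Y(0)$) so that $v_Y$ is continuous across the join, and note that the derivative need only be bounded and measurable on $[-1,1]$, which it is, being piecewise equal to $u_Y$ and $\alpha_Y'$. The first ($X$) component requires no such matching since it lands in $B^{\infty}$, where continuity is not imposed. Once continuity of $v_Y$ and measurability of its derivative are checked, the range really is $\mathbb{V}$ and the norm estimate above completes the proof; this regularity check, trivial though it is, is precisely the subtlety flagged earlier in the paper as the structural difference between the RE and RFDE components.
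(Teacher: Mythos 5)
Your proof is correct and is exactly the direct verification that the paper has in mind: the paper offers no written argument beyond stating that the result ``is an immediate consequence of the definition \eqref{G2}'', and your unfolding of that remark (linearity by inspection, componentwise sup-norm and derivative estimates, and the continuity check of the $Y$-component at $t=0$) is the natural way to make it precise. Nothing in your argument deviates from or adds to what the paper's one-line proof implicitly relies on.
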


\bigskip
\noindent The third theoretical assumption (A$x^*1$, \cite[page 536]{mas15NM}) concerns the local Lipschitz continuity of the Fr\'echet derivative of the fixed point operator $\Phi$ in \eqref{Phi} at the relevant fixed points. In the sequel $(u^{\ast},\alpha^{\ast},\omega^{\ast})\in\mathbb{U}_{}\times\mathbb{A}_{}\times(0,+\infty)$ is a fixed point of $\Phi_{}$ and $(x^{\ast},y^{\ast})$ is the corresponding $1$-periodic solution of \eqref{eq:coupled}. With respect to the validity of Assumption A$x^*1$ the following holds.
\begin{proposition}\label{p_Ax*1co}
Under \ref{T1}-\ref{T4} and \ref{T6}, there exist $r\in(0,\omega^{\ast})$ and $\kappa\geq0$ such that
\begin{equation*}
\setlength\arraycolsep{0.1em}\begin{array}{rcl}
\|D\Phi(u,\alpha,\omega)&-&D\Phi(u^{\ast},\alpha^{\ast},\omega^{\ast})\|_{\mathbb{U}\times\mathbb{A}\times\R\leftarrow\mathbb{U}\times\mathbb{A}\times(0,+\infty)}\\[2mm]
&\leq&\kappa\|(u,\alpha,\omega)-(u^{\ast},\alpha^{\ast},\omega^{\ast})\|_{\mathbb{U}\times\mathbb{A}\times\R}
\end{array}
\end{equation*}
for all $(u,\alpha,\omega)\in b((u^{\ast},\alpha.^{\ast},\omega^{\ast}),r)$.
\end{proposition}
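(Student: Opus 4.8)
The plan is to exploit the block structure of $\Phi$ in \eqref{Phi} and reduce the whole statement to a local Lipschitz estimate for $D\mathcal{F}$ alone. First I would observe that the second block of $\Phi$, namely $(\alpha,\omega)-\mathcal{B}(\mathcal{G}(u,\alpha),u,\omega)$, is an affine map of $(u,\alpha,\omega)$: indeed $\mathcal{G}$ is linear (see \eqref{G2}), while $\mathcal{B}$ in \eqref{Bco} is linear and independent of $\omega$, so their composition has a constant Fr\'echet derivative and contributes nothing to the difference $D\Phi(u,\alpha,\omega)-D\Phi(u^\ast,\alpha^\ast,\omega^\ast)$. For the first block $\mathcal{F}(\mathcal{G}(u,\alpha),u,\omega)$ the chain rule together with the linearity of $\mathcal{G}$ (so that $D\mathcal{G}=\mathcal{G}$, a fixed bounded operator by Proposition \ref{l_G}) gives $D[\mathcal{F}\circ(\mathcal{G},\mathrm{id},\mathrm{id})](u,\alpha,\omega)=D\mathcal{F}(\mathcal{G}(u,\alpha),u,\omega)\circ(\mathcal{G},\mathrm{id}_{\mathbb{U}},\mathrm{id}_{\R})$. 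Since the trailing factor is a fixed bounded operator, the claimed estimate follows once I prove that $D\mathcal{F}$ is locally Lipschitz at $(v^\ast,u^\ast,\omega^\ast)$, where $v^\ast=\mathcal{G}(u^\ast,\alpha^\ast)$; the boundedness of $\mathcal{G}$ then converts a bound in $\|(v,\omega)-(v^\ast,\omega^\ast)\|$ into one in $\|(u,\alpha,\omega)-(u^\ast,\alpha^\ast,\omega^\ast)\|$. I would fix $r\in(0,\omega^\ast)$ from the start so that $\omega$ stays bounded away from $0$ on the ball, keeping $s_\omega$ and $\tau/\omega$ under control.

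Next I would insert the explicit form of $D\mathcal{F}$ from Proposition \ref{p_Afbco} and estimate the difference term by term. Each of $\mathfrak{L}_X,\mathfrak{M}_X,\mathfrak{L}_Y,\mathfrak{M}_Y$ is a finite sum of products of two kinds of factors: outer factors of the form $D_1\tilde F,D_2\tilde F$ (resp.\ $\tilde G$) evaluated at $(\tilde K(v_t\circ s_\omega),w_t\circ s_\omega)$, and inner factors given by the integral operators $L_X,M_X$ (resp.\ $L_Y,M_Y$) and by $K,H$ themselves. For every product I would apply the telescoping identity $AB-A^\ast B^\ast=(A-A^\ast)B+A^\ast(B-B^\ast)$. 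The difference of the outer factors is controlled directly by \ref{T6}, which is precisely the local Lipschitz continuity in $(v_t,\omega)$ of the maps $D\tilde F(\tilde K(\cdot),\cdot)$ and $D\tilde G(\tilde H(\cdot),\cdot)$, uniformly in $t$; the difference of the inner factors is controlled by the regularity hypotheses \ref{T3}, \ref{T5}, since $K,H$ and $D_1K,D_2K,D_1H,D_2H$ are bounded and Lipschitz on the compact range swept by $v$ on the ball, so the integral operators depend Lipschitz-continuously on $(v,\omega)$. The remaining undifferenced factors are bounded uniformly on the ball by \ref{T3}, \ref{T4} and continuity. Summing the finitely many contributions and taking the supremum over $t\in[0,1]$ and over unit test directions $(v,u,\omega)$ yields a single constant $\kappa$.

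The main obstacle is the $\omega$-derivative terms $\mathfrak{M}_X$ and $\mathfrak{M}_Y$, specifically the boundary contribution $-\tfrac{\tau}{\omega}K(-\tau,v(t-\tfrac{\tau}{\omega}),w(t-\tfrac{\tau}{\omega}))$ inside $M_X$ (together with the analogous $D_2\tilde F[w'_t\circ s_\omega]$ term). Here the evaluation point $t-\tau/\omega$ itself depends on $\omega$, so when $\omega\neq\omega^\ast$ the arguments of $K$ are shifted; since the $X$-component of $\mathbb{V}$ is only $B^\infty$ (the structural low-regularity feature of the RE side stressed in the introduction), the map $\omega\mapsto v(t-\tau/\omega)$ is not even continuous for a generic $v$ and cannot be bounded by $|\omega-\omega^\ast|$ directly. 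The way around this is to split the boundary difference, via add-and-subtract, into a same-point piece $K(-\tau,v(t-\tfrac{\tau}{\omega}),\cdot)-K(-\tau,v^\ast(t-\tfrac{\tau}{\omega}),\cdot)$, controlled by $\|v-v^\ast\|_\infty$ through the Lipschitz dependence of $K$ on its arguments, and a shifted-point piece involving only the fixed-point solution $v^\ast=(x^\ast,y^\ast)$. The latter is harmless because $v^\ast$ is genuinely more regular than a generic element of $\mathbb{V}$: being a periodic solution of \eqref{eq:coupled}, $x^\ast$ is continuous (indeed Lipschitz), so $v^\ast(t-\tau/\omega)-v^\ast(t-\tau/\omega^\ast)$ is bounded by $|\omega-\omega^\ast|$ uniformly in $t$. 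This interplay between the low regularity of the ambient space and the extra regularity available at the fixed point is exactly the delicate point, and it is where I would expect the bulk of the technical work, detailed separately in \ref{s_appendix}, to lie.
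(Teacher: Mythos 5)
Your proposal is correct and follows essentially the same route as the paper: the paper's (very condensed) proof rests precisely on the key fact that the fixed point lies in a more regular subspace than the ambient space — a consequence of \eqref{hpcoupled1}--\eqref{hpcoupled2} — which is exactly the add-and-subtract mechanism you use to tame the $\omega$-dependent evaluation points in $\mathfrak{M}_X,\mathfrak{M}_Y$, while the remaining reductions (linearity of $\mathcal{B}$ and $\mathcal{G}$, telescoping with \ref{T6} for the outer factors) are the standard steps inherited from the separate RFDE and RE proofs in \cite{ab20mas,ab22RE}. One small correction: \ref{s_appendix} is devoted to Proposition \ref{p_Ax*2co} (showing $\xi_1^*\notin R$), not to this proposition, so the technical work you defer there actually resides in the cited proofs of \cite{ab20mas,ab22RE}.
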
 

\begin{proof} 
Just as its proof for the RFDE case in \cite{ab20mas} and that for the RE case in \cite{ab22RE}, the proposition can be proved thanks to the fact that  $u^*$ lies in fact in a more regular subspace of its space $\mathbb{U}$, which is a consequence of the assumptions \eqref{hpcoupled1}-\eqref{hpcoupled2}.\end{proof}

The fourth (and last) theoretical assumption (A$x^{\ast}2$, \cite[page 536]{mas15NM}), concerns the well-posedness of a linearized inhomogeneous version of the PAF \eqref{PAF}. Its validity can be proved under (T1), (T2), (T3) and (T4), together with an additional requirement, which in turn follows from assuming, e.g., the {\it hyperbolicity} of the periodic solution\footnote{Let us remark that the condition of hyperbolicity is necessary for the local stability analysis of periodic solutions in view of the Principle of Linearized Stability \cite{bl20}.} of the original problem.
It is convenient to introduce the abbreviations
\begin{equation*}%\label{L*M*}
\mathfrak{L}_{}^{\ast}:=\mathfrak{L}_{}(\cdot;v^{\ast},\omega^{\ast}),\qquad\mathfrak{M}_{}^{\ast}:=\mathfrak{M}_{}(\cdot;v^{\ast},\omega^{\ast}).
\end{equation*}
\begin{proposition}\label{p_Ax*2co}
Under \ref{T1}-\ref{T5}, let $T^*(t,s):X\times Y\to X\times Y$ be the evolution operator for the linear homogeneous delay equation
\begin{equation*}%\label{Lrfdetco}
(x(t),y'(t))=\mathfrak{L}(t;v^{\ast},\omega^{\ast})[(x_{t},y_t)\circ s_{\omega^{\ast}}].
\end{equation*}
If the eigenvalue $1\in\sigma(T^{\ast}(1,0))$ is simple, then the linear bounded operator $I_{\mathbb{U}\times\mathbb{A}\times\mathbb{B}}-D\Phi(u^{\ast},\alpha^{\ast},\omega^{\ast})$ is invertible, i.e., for all $(u_{0},\alpha_{0},\omega_{0})\in\mathbb{U}\times\mathbb{A}\times\mathbb{B}$ there exists a unique $(u,\alpha,\omega)\in\mathbb{U}\times\mathbb{A}\times\mathbb{B}$ such that
\begin{equation*}%\label{Ax*20co}
\left\{\setlength\arraycolsep{0.1em}\begin{array}{l}
u=\mathfrak{L}^{\ast}[\mathcal{G}(u,\alpha)_{\cdot}\circ s_{\omega^{\ast}}]+\omega\mathfrak{M}^{\ast}+u_{0}\\[2mm]
\alpha=\mathcal{G}(u,\alpha)_{1}+\alpha_{0}\\[2mm]
p(\mathcal{G}(u,\alpha)|_{[0,1]})=\omega_{0}.
\end{array}
\right.
\end{equation*}
\end{proposition}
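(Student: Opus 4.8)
The plan is to prove the invertibility of $I_{\mathbb{U}\times\mathbb{A}\times\mathbb{B}}-D\Phi(u^{\ast},\alpha^{\ast},\omega^{\ast})$ by showing that the displayed inhomogeneous system admits, for every datum $(u_0,\alpha_0,\omega_0)$, a unique solution $(u,\alpha,\omega)$ depending boundedly on the datum. First I would set $v:=\mathcal{G}(u,\alpha)$ and unfold the reconstruction operator \eqref{G2}: on $[-1,0]$ one has $v=\alpha$, while on $(0,1]$ the first $d$ components satisfy $v_X=u_X$ and the last $d$ satisfy $v_Y'=u_Y$. Substituting into the first line turns it into the linear nonhomogeneous delay equation
\begin{equation*}
(v_X(t),v_Y'(t))=\mathfrak{L}^{\ast}[v_{\cdot}\circ s_{\omega^{\ast}}](t)+\omega\mathfrak{M}^{\ast}(t)+u_0(t),\qquad t\in(0,1],
\end{equation*}
that is, precisely the forced version of the homogeneous equation whose evolution operator is $T^{\ast}(t,s)$, with forcing $\omega\mathfrak{M}^{\ast}+u_0$ and initial state $v_0=\alpha$.

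Next I would apply the variation-of-constants formula associated with $T^{\ast}$ to write the state at the end of the period as $v_1=T^{\ast}(1,0)\alpha+\Psi(\omega,u_0)$, where $\Psi$ is an explicit bounded map, linear in $(\omega,u_0)$, collecting the contribution of the forcing. The remaining periodicity condition $\alpha=v_1+\alpha_0$ then becomes the range equation
\begin{equation*}
(I-T^{\ast}(1,0))\alpha=\Psi(\omega,u_0)+\alpha_0
\end{equation*}
on $X\times Y$. The crux is the spectral structure of the monodromy operator $T^{\ast}(1,0)$: following the compactness arguments used in the separate RFDE \cite{ab20mas} and RE \cite{ab22RE} cases, $I-T^{\ast}(1,0)$ is Fredholm of index zero, and the hypothesis that $1\in\sigma(T^{\ast}(1,0))$ is simple yields the topological direct sum $X\times Y=\mathcal{N}\oplus\mathcal{R}$, with $\mathcal{N}:=\ker(I-T^{\ast}(1,0))$ one-dimensional, $\mathcal{R}:=\mathrm{range}(I-T^{\ast}(1,0))$ closed of codimension one, and $(I-T^{\ast}(1,0))|_{\mathcal{R}}$ boundedly invertible.

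With this decomposition the system closes through the two scalar degrees of freedom still available, namely the period parameter $\omega$ and the phase condition $p$. The kernel $\mathcal{N}$ is spanned by the state of the rescaled time-derivative of the periodic orbit, and the $\omega$-dependence of the forcing enters exactly through $\mathfrak{M}^{\ast}$, which by construction is the right $\omega$-derivative of $\mathcal{F}$ along that same direction; I would verify that its image under $\Psi$ projects nontrivially onto the complement of $\mathcal{R}$, so that the single solvability condition (requiring the right-hand side to lie in $\mathcal{R}$) fixes $\omega$ uniquely. Once $\omega$ is determined, $\alpha$ is pinned down modulo $\mathcal{N}$, say $\alpha=\alpha_{\mathrm{p}}+c\phi$ with $\phi$ spanning $\mathcal{N}$; the phase condition $p(\mathcal{G}(u,\alpha)|_{[0,1]})=\omega_0$, being nondegenerate on the kernel direction — which is precisely what a good phase condition guarantees in the continuation framework — fixes the scalar $c$ uniquely. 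Recovering $u=(v_X,v_Y')$ on $(0,1]$ then gives the unique triple $(u,\alpha,\omega)$, and boundedness of the solution map follows from that of $(I-T^{\ast}(1,0))|_{\mathcal{R}}^{-1}$ together with the boundedness of $\mathcal{G}$ (Proposition \ref{l_G}).

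I expect the main obstacle to be establishing and exploiting the spectral structure of $T^{\ast}(1,0)$ in the mixed-regularity state space $X\times Y=B^{\infty}\times B^{1,\infty}$: the RE component carries no intrinsic smoothing whereas the RFDE component is obtained by integrating $u_Y$, so the variation-of-constants representation and the compactness underpinning the Fredholm alternative must be assembled componentwise and then matched across the coupling — this is the genuinely new difficulty with respect to \cite{ab20mas} and \cite{ab22RE}. The two nondegeneracy checks closing the argument, namely that $\mathfrak{M}^{\ast}$ is transversal to $\mathcal{R}$ so that $\omega$ is determined, and that $p$ is transversal to $\mathcal{N}$ so that $c$ is determined, are the remaining delicate points; together they encode the simplicity (hence hyperbolicity) hypothesis that renders the augmented linearized BVP well-posed.
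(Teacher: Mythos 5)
Your skeleton matches the paper's own argument: variation of constants along $T^{\ast}$, the periodicity condition recast as $(I-T^{\ast}(1,0))\alpha=\omega\xi_1^{\ast}+\xi_2^{\ast}$ (your $\Psi(\omega,u_0)+\alpha_0$), a decomposition $X\times Y=R\oplus K$ coming from simplicity of the multiplier $1$, a solvability condition fixing $\omega$, and the phase condition fixing the remaining kernel coordinate. However, there is a genuine gap at the single step on which the whole proposition turns: you write that you ``would verify'' that the image of $\mathfrak{M}^{\ast}$ under the variation-of-constants map projects nontrivially onto the complement of $R$ (in the paper's notation, that $\xi_1^{\ast}\notin R$), but you give no argument for it and no indication of how such an argument would go. This is not a routine check to be discharged at the end; it is the hardest part of the proof, and the paper devotes the entire \ref{s_appendix} to it. The paper's verification requires reformulating the linearized coupled system as a Volterra integral equation with measure kernel in the sense of \cite[Chapter 10]{grip09}, establishing the existence of a resolvent kernel of type $B^{\infty}$, introducing the adjoint equation and the backward evolution family, constructing a nondegenerate bilinear pairing $[\cdot,\cdot]_t$, and proving by a long computation that the forward and backward monodromy operators are adjoint with respect to that pairing, after which the conclusion follows the corresponding final steps in \cite{ab20mas,ab22RE}. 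Your transversality claim, as stated, is an assertion of precisely what must be proven, so the proposal does not close.

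Two secondary points. First, your final step assumes that the phase functional $p$ is nondegenerate on the kernel direction (``what a good phase condition guarantees''), which is not a hypothesis of the proposition; the paper instead concludes by invoking the last steps of the proofs in \cite{ab20mas} and \cite{ab22RE}. Second, your Fredholm-via-compactness claim for $I-T^{\ast}(1,0)$ in the mixed-regularity space $X\times Y=B^{\infty}\times B^{1,\infty}$ is also only asserted; the paper obtains the decomposition $X\times Y=R\oplus K$ by arguing as in \cite[Proposition 3.4]{ab22RE}, and it is exactly for handling the coupled, mixed-regularity setting that hypothesis \ref{T5} and the measure-kernel machinery of the appendix are needed.
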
 
\begin{proof}
Using the variation of constant formula for RFDEs and arguing as in \cite[Proposition 3.4]{ab22RE} for the RE component, from the periodicity condition one obtains an equality of the form
\begin{equation*}
\alpha=T^{\ast}(1,0)\alpha+\omega\xi_1^*+\xi_2^*,
\end{equation*}
where the term $\xi_1^*$ depends only on the linearized problem (i.e., on $\mathfrak{M}^*$) while $\xi_2^*$ depends on $u_0$ and $\varphi_0$. By the hypothesis on the multiplier 1, one can decompose the space $X\times Y=R\oplus K$ as in \cite[Proposition 3.4]{ab22RE}. By assuming that $\xi_1^*\not\in R$ (the proof of which is given in \ref{s_appendix}, as anticipated in the introduction), one gets a unique solution $\omega$, and therefore a unique possible value of $\alpha-T^{\ast}(1,0)\alpha$.
\end{proof}
\subsection{Numerical assumptions}
\label{sub_numerical}
As anticipated, the present subsection deals with the numerical assumptions, which concern the chosen discretization scheme for the numerical method. Such scheme is defined by the {\it primary} and the {\it secondary} discretizations.

\bigskip
As in \cite{ab20mas}, the primary discretization consists in reducing the spaces $\mathbb{U}$ and $\mathbb{A}$ to finite-dimensional spaces $\mathbb{U}_{L}$ and $\mathbb{A}_{L}$, given a level of discretization $L$. This happens by means of {\it restriction} operators $\rho_{L}^{+}:\mathbb{U}\rightarrow\mathbb{U}_{L}$, $\rho_{L}^{-}:\mathbb{A}\rightarrow\mathbb{A}_{L}$ and {\it prolongation} operators $\pi_{L}^{+}:\mathbb{U}_{L}\rightarrow\mathbb{U}$, $\pi_{L}^{-}:\mathbb{A}_{L}\rightarrow\mathbb{A}$, which extend respectively to $R_{L}:\mathbb{U}\times\mathbb{A}\times\mathbb{B}\rightarrow\mathbb{U}_{L}\times\mathbb{A}_{L}\times\mathbb{B}$ given by $R_{L}(u,\alpha,\omega):=(\rho_{L}^{+}u,\rho_{L}^{-}\alpha,\omega)$ and $P_{L}:\mathbb{U}_{L}\times\mathbb{A}_{L}\times\mathbb{B}\rightarrow\mathbb{U}\times\mathbb{A}\times\mathbb{B}$ given by $P_{L}(u_{L},\alpha_{L},\omega):=(\pi_{L}^{+}u_{L},\pi_{L}^{-}\alpha_{L},\omega)$.
All of them are linear and bounded. In the following we describe the specific choices we make in this context, based on piecewise polynomial interpolation.

Starting from $\mathbb{U}$, which concerns the interval $[0,1]$, we choose the uniform {\it outer} mesh
\begin{equation}\label{outmesh+}
\Omega_{L}^{+}:=\{t_{i}^{+}=ih\ :\ i\in\{0,\ldots,L\},\,h=1/L\}\subset[0,1],
\end{equation}
and {\it inner} meshes
\begin{equation}\label{inmesh+}
\Omega_{L,i}^{+}:=\{t_{i,j}^{+}:=t_{i-1}^{+}+c_{j}h\ :\ j\in\{1,\ldots,m\}\}\subset[t_{i-1}^{+},t_{i}^{+}],\quad i\in\{1,\ldots,L\},
\end{equation}
where $0<c_{1}<\cdots<c_{m}<1$ are given abscissae for $m$ a positive integer. Correspondingly, we define
\begin{equation}\label{UL}
\mathbb{U}_{L}:=\mathbb{R}^{2d(1+Lm)},
\end{equation}
whose elements $u_{L}$ are indexed as
\begin{equation}\label{uL}
u_{L}:=(u_{1,0},u_{1,1},\ldots,u_{1,m},\ldots,u_{L,1}\ldots,u_{L,m})^{T}
\end{equation}
with components in $\mathbb{R}^{2d}$. Finally, we define, for $u\in\mathbb{U}$,
\begin{equation}\label{rL+}
\rho_{L}^{+}u:=(u(0),u(t_{1,1}^{+}),\ldots,u(t_{1,m}^{+}),\ldots,u(t_{L,1}^{+})\ldots,u(t_{L,m}^{+}))^{T}\in\mathbb{U}_{L}
\end{equation}
and, for $u_{L}\in\mathbb{U}_{L}$, $\pi_{L}^{+}u_{L}\in\mathbb{U}$ as the unique element of the space
\begin{equation}\label{PiLm+}
\Pi_{L,m}^{+}:=\{p\in C([0,1],\mathbb{R}^{2d})\ :\ p\vert_{[t_{i-1}^{+},t_{i}^{+}]}\in\Pi_m,\;i\in\{1,\ldots,L\}\}
\end{equation}
such that
\begin{equation}\label{pL+}
\pi_{L}^{+}u_{L}(0)=u_{1,0},\quad\pi_{L}^{+}u_{L}(t_{i,j}^{+})=u_{i,j},\quad j\in\{1,\ldots,m\},\;i\in\{1,\ldots,L\}.
\end{equation}
Above $\Pi_{m}$ is the space of $\mathbb{R}^{2d}$-valued polynomials having degree $m$ and, when needed, we represent $p\in\Pi_{L,m}^{+}$ through its pieces as
\begin{equation}\label{lagrange}
p\vert_{[t_{i-1}^{+},t_{i}^{+}]}(t)=\sum_{j=0}^{m}\ell_{m,i,j}(t)p(t_{i,j}^{+}),\quad t\in[0,1],
\end{equation}
where, for ease of notation, we implicitly set
\begin{equation}\label{t0+}
t_{i,0}^{+}:=t_{i-1}^{+},\quad i\in\{1,\ldots,L\},
\end{equation}
and $\{\ell_{m,i,0},\ell_{m,i,1},\ldots,\ell_{m,i,m}\}$ is the Lagrange basis relevant to the nodes $\{t_{i,0}^{+}\}\cup\Omega_{L,i}^{+}$. Observe that the latter is invariant with respect to $i$ as long as we fix the abscissae $c_{j}$, $j\in\{1,\ldots,m\}$, defining the inner meshes \eqref{inmesh+}. Indeed, for every $i\in\{1,\ldots,L\}$,
\begin{equation*}
\ell_{m,i,j}(t)=\ell_{m,j}\left(\frac{t-t_{i-1}^{+}}{h}\right),\quad t\in[t_{i-1}^{+},t_{i}^{+}],
\end{equation*}
where $\{\ell_{m,0},\ell_{m,1},\ldots,\ell_{m,m}\}$ is the Lagrange basis in $[0,1]$ relevant to the abscissae $c_{0},c_{1},\ldots,c_{m}$ with $c_{0}:=0$. 
%Moreover, it is useful to define also the associated Lebesgue constants as
%\begin{equation*}
%\Lambda_{m,i}:=\max_{t\in[t_{i-1}^{+},t_{i}^{+}]}\sum_{j=0}^{m}|\ell_{m,i,j}(t)|,\quad i\in\{1,\ldots,L\},
%\end{equation*}
%which turn out to be independent of $i$ as well:
%\begin{equation}\label{lebesgue}
%\Lambda_{m,i}=\Lambda_{m}:=\max_{t\in[0,1]}\sum_{j=0}^{m}|\ell_{m,j}(t)|.
%\end{equation}

Similarly, for $\mathbb{A}$, which concerns the interval $[-1,0]$, we choose
\begin{equation}\label{outmesh-}
\Omega_{L}^{-}:=\{t_{i}^{-}=ih-1\ :\ i\in\{0,\ldots,L\},\;h=1/L\}\subset[-1,0],
\end{equation}
and
\begin{equation}\label{inmesh-}
\Omega_{L,i}^{-}:=\{t_{i,j}^{-}:=t_{i-1}^{-}+c_{j}h\ :\ j\in\{1,\ldots,m\}\}\subset[t_{i-1}^{-},t_{i}^{-}],\quad i\in\{1,\ldots,L\}.
\end{equation}
Correspondingly, we define
\begin{equation}\label{AL}
\mathbb{A}_{L}:=\mathbb{R}^{2(1+Lm)\times d}
\end{equation}
with indexing
\begin{equation}\label{psiL}
\alpha_{L}:=(\alpha_{1,0},\alpha_{1,1},\ldots,\alpha_{1,m},\ldots,\alpha_{L,1}\ldots,\alpha_{L,m})^{T};
\end{equation}
for $\alpha\in\mathbb{A}$,
\begin{equation}\label{rL-}
\rho_{L}^{-}\alpha:=(\alpha(-1),\alpha(t_{1,1}^{-}),\ldots,\alpha(t_{1,m}^{-}),\ldots,\alpha(t_{L,1}^{-})\ldots,\alpha(t_{L,m}^{-}))^{T}\in\mathbb{A}_{L}
\end{equation}
and, for $\alpha_{L}\in\mathbb{A}_{L}$, $\pi_{L}^{-}\alpha_{L}\in\mathbb{A}$ as the unique element of the space
\begin{equation}\label{PiLm-}
\Pi_{L,m}^{-}:=\{p\in C([-1,0],\mathbb{R}^{2d})\ :\ p\vert_{[t_{i-1}^{-},t_{i}^{-}]}\in\Pi_m,\;i\in\{1,\ldots,L\}\}
\end{equation}
such that
\begin{equation}\label{pL-}
\pi_{L}^{-}\alpha_{L}(-1)=\alpha_{1,0},\quad\pi_{L}^{-}\alpha_{L}(t_{i,j}^{-})=\alpha_{i,j},\quad j\in\{1,\ldots,m\},\;i\in\{1,\ldots,L\}.
\end{equation}
Elements in $\Pi_{L,m}^{-}$ are represented in the same way as those of $\Pi_{L,m}^{+}$ by suitably adapting both \eqref{lagrange} and \eqref{t0+}.
\begin{remark}\label{r_primary}
It is worth pointing out that more general choices can be made concerning outer and inner meshes. In particular, as already remarked in Section \ref{s_piecewise}, in practical applications {\it adaptive} outer meshes represent a standard for RFDEs, see, e.g., \cite{elir00}. As for inner meshes, abscissae including the extrema of $[0,1]$ can also be considered, paying attention to put the correct constraints at the internal outer nodes, i.e., $t_{i}^{\pm}$ for $i\in\{1,\ldots,L-1\}$.
\end{remark}
\bigskip
The secondary discretization consists in replacing $\mathcal{F}$ in the first of \eqref{Phi} with an operator $\mathcal{F}_{M}$ that can be exactly computed, for a given level of discretization $M$. In particular, we define $\mathcal{F}_{M}$ through $\tilde{K}_{M},\tilde{H}_M$ defined through \eqref{quadrulek}. Correspondingly, $F_M$ and $G_M$ are defined from suitable discrete versions $\tilde{F}_M$ and $\tilde{G}_M$ of $\tilde{F}$ and $\tilde{G}$, respectively. A secondary discretization for $\mathcal{G}$ in \eqref{Phi} is instead unnecessary, since it can be evaluated exactly in $\pi_{L}^{+}\mathbb{U}_{L}\times\pi_{L}^{-}\mathbb{A}_{L}$ according to \eqref{UL} and \eqref{AL}. As for the operator $p$ defining the phase condition in \eqref{Phi}, we assume that it can be evaluated exactly in $\pi_{L}^{+}\mathbb{U}_{L}$\footnote{This is indeed true in the case of integral phase conditions if the piecewise quadrature is based on the mesh of the primary discretization, which is the standard approach in practical applications.}. Eventually, $\Phi_{M}$ is the operator obtained by replacing $\mathcal{F}$ in $\Phi$ with its approximated version, i.e., $\Phi_{M}:\mathbb{U}\times\mathbb{A}\times\mathbb{B}\rightarrow\mathbb{U}\times\mathbb{A}\times\mathbb{B}$ defined by
\begin{equation}\label{PhiMco}
\Phi_{M}(u,\alpha,\omega):=
\begin{pmatrix}
F_{M}(\mathcal{G}(u,\alpha)_{(\cdot)}\circ s_{\omega})\\[2mm]
\omega G_{M}(\mathcal{G}(u,\alpha)_{(\cdot)}\circ s_{\omega})\\[2mm]
\mathcal{G}(u,\alpha)_{1}\\[2mm]
\omega-p(\mathcal{G}(u,\alpha)|_{[0,1]})
\end{pmatrix}.
\end{equation}

\bigskip
From the two discretizations together we can define the discrete version 
\begin{equation*}
\Phi_{L,M}:=R_{L}\Phi_{M} P_{L}:\mathbb{U}_{L}\times\mathbb{A}_{L}\times\mathbb{B}\rightarrow\mathbb{U}_{L}\times\mathbb{A}_{L}\times\mathbb{B}
\end{equation*}
of the fixed point operator $\Phi$ in \eqref{Phi} as
\begin{equation}\label{eq:philm}
\Phi_{L,M}(u_{L},\alpha_{L},\omega):=
\begin{pmatrix}
\rho_{L}^{+}F_{M}(\mathcal{G}(\pi_{L}^{+}u_{L},\pi_{L}^{-}\alpha_{L})_{(\cdot)}\circ s_{\omega})\\[2mm]
\omega\rho_{L}^{+}G_{M}(\mathcal{G}(\pi_{L}^{+}u_{L},\pi_{L}^{-}\alpha_{L})_{(\cdot)}\circ s_{\omega})\\[2mm]
\rho_{L}^{-}(\mathcal{G}(\pi_{L}^{+}u_{L},\pi_{L}^{-}\alpha_{L})_1\\[2mm]
\omega-p(\mathcal{G}(\pi_{L}^{+}u_{L},\pi_{L}^{-}\alpha_{L})
\end{pmatrix}.
\end{equation}
A fixed point $(u_{L,M}^{\ast},\alpha_{L,M}^{\ast},\omega_{L,M}^{\ast})$ of $\Phi_{L,M}$ can be found by standard solvers for nonlinear systems of algebraic equations\footnote{{In particular, we use Newton's method, see Section \ref{s_results}.}} and, as will be shown in Subsection \ref{s_convresult}, its prolongation $P_{L}(u_{L,M}^{\ast},\alpha_{L,M}^{\ast},\omega_{L,M}^{\ast})$ is then considered as an approximation of a fixed point $(u^{\ast},\alpha^{\ast},\omega^{\ast})$ of $\Phi$ in \eqref{Phi}. Correspondingly, $v_{L,M}^{\ast}:=\mathcal{G}(\pi_{L}^{+}u_{L,M}^{\ast},\pi_{L}^{-}\alpha_{L,M}^{\ast})$ is considered as an approximation of the solution $v^{\ast}=\mathcal{G}(u^{\ast},\alpha^{\ast})$ of \eqref{Phi}.

\bigskip
The hypotheses on the discretization method needed to prove the validity of the numerical assumptions in \cite{mas15NM} are:
\begin{enumerate}[label=(N\arabic*),ref=(N\arabic*)]
\item\label{N1} the primary discretization of the space $\mathbb{U}$ is based on the choices \eqref{outmesh+}--\eqref{pL+};
\item\label{N2} the primary discretization of the space $\mathbb{A}$ is based on the choices \eqref{outmesh-}--\eqref{pL-};
\item\label{N3} the nodes $\eta_0,\ldots,\eta_M$, together with the weights $w_0,\ldots,w_M$ chosen for the secondary discretization as in \eqref{quadrulek} define a (piecewise) interpolatory quadrature formula which is convergent in $B^{\infty}([0,1],\R^d)$;
\item\label{N4} $\tilde F_M,\tilde G_M:\R^d\times \mathtt{Y}\to\R^d$ are continuous with partial derivatives $D_1\tilde F_M$, $D_2\tilde F_M$, $D_1\tilde G_M$, $D_2\tilde G_M$;
\item\label{N5} $D_1\tilde F_M,D_1\tilde G_M\in\mathcal{C}(\R^d,\R^d)$ and $D_2\tilde F_M,D_2\tilde G_M\in\mathcal{C}(\mathtt{Y},\R^d)$;

\item\label{N6} there exist $r>0$ and $\kappa\geq0$ such that
\begin{equation*}
\left\{\setlength\arraycolsep{0.1em}\begin{array}{rcl}
\|D\tilde F_M(\tilde K_M(v_t\circ s_{\omega}), w_t\circ s_{\omega})&-&D\tilde F_M(\tilde K_M(v^*_t\circ s_{\omega^*}), w^*_t\circ s_{\omega^*})\|_{\R^d\leftarrow \R^d\times\mathtt{Y}}\\
&\leq &\kappa\|(v_t,\omega)-(v^{\ast}_{t}, \omega^{\ast})\|_{X\times Y\times\R}\\
\|D\tilde G_M(\tilde H_M(v_t\circ s_{\omega}), w_t\circ s_{\omega})&-&D\tilde G_M(\tilde H_M(v^*_t\circ s_{\omega^*}), w^*_t\circ s_{\omega^*})\|_{\R^d\leftarrow \R^d\times\mathtt{Y}}\\
&\leq& \kappa\|(v_t,\omega)-(v^{\ast}_{t}, \omega^{\ast})\|_{X\times Y\times\R},
\end{array}\right.
\end{equation*}
for every $(v_t,\omega)\in b((v^{\ast}_{t},\omega^{\ast}),r)$, uniformly with respect to $t\in[0,1]$, where $w$ indicates the components of $v$ obtained from the RFDE;
\item\label{N7} the following hold uniformly with respect to $t\in[0,1]$:
\begin{itemize}
\item $\displaystyle\lim_{M\rightarrow\infty}|F_{M}(\tilde K_M(v^*_t\circ s_{\omega^*}), w^*_t\circ s_{\omega^*})-F(\tilde K_M(v^*_t\circ s_{\omega^*}), w^*_t\circ s_{\omega^*})|=0$;
\item $\displaystyle\lim_{M\rightarrow\infty}|G_{M}(\tilde H_M(v^*_t\circ s_{\omega^*}), w^*_t\circ s_{\omega^*})-G(\tilde H_M(v^*_t\circ s_{\omega^*}), w^*_t\circ s_{\omega^*})|=0$;
\end{itemize}
\item\label{N8} the following hold uniformly with respect to $t\in[0,1]$:
\begin{itemize}
\item $\setlength\arraycolsep{0.1em}\begin{array}{rl}\displaystyle\lim_{M\rightarrow\infty}\|&D\tilde F_{M}(\tilde K_M(v^*_t\circ s_{\omega^*}), w^*_t\circ s_{\omega^*})\\
&-DF(\tilde K_M(v^*_t\circ s_{\omega^*}), w^*_t\circ s_{\omega^*}\|_{\mathbb{R}^d\leftarrow\mathbb{R}\times\mathtt{Y}}=0\end{array}$;
\item $\setlength\arraycolsep{0.1em}\begin{array}{rl}\displaystyle\lim_{M\rightarrow\infty}\|&D\tilde G_{M}(\tilde H_M(v^*_t\circ s_{\omega^*}), w^*_t\circ s_{\omega^*})\\
&-D\tilde G(\tilde H_M(v^*_t\circ s_{\omega^*}), w^*_t\circ s_{\omega^*})\|_{\mathbb{R}^d\leftarrow\mathbb{R}\times\mathtt{Y}}=0\end{array}$.
\end{itemize}
\end{enumerate}
{Let us remark that \ref{N4}-\ref{N8} are not restrictive for common models and standard secondary discretizations (as, e.g., \eqref{quadrulek}).}

\bigskip
The first numerical assumption to be verified in \cite{mas15NM} is Assumption A$\mathfrak{F}_K\mathfrak{B}_K$ (page 535). As already observed, $\mathcal{B}$ and $p$ are linear functions, thus the proof of its validity is a direct consequence of the following.
\begin{proposition}\label{p_AfMbco}
Under \ref{T1}-\ref{T3} and \ref{N4} there exists $r\in(0,\omega^{\ast})$ such that $\mathcal{F}_{M}$ is Fr\'echet-differentiable, from the right with respect to $\omega$, at every point $(\hat{v},\hat u,\hat\omega)\in b((v^{\ast},u^{\ast},\omega^{\ast}),r)$. The first component of the differential $D\mathcal{F}_M$ is given by
\begin{equation*}%\label{DFMco}
D\mathcal{F}_{M,X}(\hat v,\hat u,\hat\omega)(v,u,\omega)=\mathfrak{L}_{M,X}(\cdot;\hat v,\hat\omega)[v_{(\cdot)}\circ s_{\hat\omega}]+\omega\mathfrak{M}_{M,X}(\cdot;\hat v,\hat\omega)
\end{equation*}
for $(v,u,\omega)\in\mathbb{V}\times\mathbb{U}\times(0,+\infty)$, where, for $t\in[0,1]$,
\begin{equation*}%\label{Lco1M}
\setlength\arraycolsep{0.1em}\begin{array}{rl}
\mathfrak{L}_{M,X}(t;\hat v,\hat\omega)[v_{t}\circ s_{\hat\omega}]:=& D_1\tilde F_M(\tilde K_M(\hat v_{t}\circ s_{\hat\omega}),\hat w_t\circ s_{\hat\omega})L_{M,X}(t;\hat v,\hat\omega)[v_{t}\circ s_{\hat\omega}]
\end{array}
\end{equation*}
and
\begin{equation*}%\label{Mco1M}
\setlength\arraycolsep{0.1em}\begin{array}{rl}
\mathfrak{M}_{M,X}(t;v,\omega):=&D_1\tilde F_M(\tilde K_M(v_t\circ s_{\omega}),w_t\circ s_{\omega})M_{M,X}(t;v,\omega)\\[2mm]
-&D_2\tilde F_M(\tilde K_M(v_{t}\circ s_{\omega}), w_t\circ s_{\omega})[v'_{t}\circ s_{\omega}]\cdot\frac{s_{\omega}}{\omega},
\end{array}
\end{equation*}
and, in turn, $L_{M,X}$ and $M_{M,X}$ are defined as $\mathfrak{L}_M$ and $\mathfrak{M}_M$ in the RE case, respectively.
The second component of $D\mathcal{F}_M(\hat v,\hat u,\hat\omega)(v,u,\omega)$ is defined similarly from
\begin{equation*}%\label{Lco2M}
\setlength\arraycolsep{0.1em}\begin{array}{rl}
\mathfrak{L}_{M,Y}(t;\hat v,\hat\omega)[v_{t}\circ s_{\hat\omega}]:=\hat\omega[& D_1\tilde G_M(\tilde H_M(\hat v_{t}\circ s_{\hat\omega}),\hat w_t\circ s_{\hat\omega})L_{M,Y}(t;\hat v,\hat\omega)[v_{t}\circ s_{\hat\omega}]\\[2mm]
+&\displaystyle D_2\tilde G_M(\tilde H_M(\hat v_{t}\circ s_{\hat\omega}),\hat w_t\circ s_{\hat\omega})[\hat v_t\circ s_{\hat\omega}]]
\end{array}
\end{equation*}
and
\begin{equation*}%\label{Mco2M}
\setlength\arraycolsep{0.1em}\begin{array}{rcl}
\mathfrak{M}_{M,Y}(t;v,\omega)&:=&G_M(v_{t}\circ s_{\omega})\\&&+\omega\bigg[D_1\tilde G_M(\tilde H_M(v_t\circ s_{\omega}),w_t\circ s_{\omega})M_{M,Y}(t;v,\omega)\\[2mm]
&&-D_2\tilde G_M(\tilde H_M(v_{t}\circ s_{\omega}), w_t\circ s_{\omega})[v'_{t}\circ s_{\omega}]\cdot\frac{s_{\omega}}{\omega}\bigg],
\end{array}
\end{equation*}
where $L_{M,Y}$ and $M_{M,Y}$ are defined similarly to $L_{M,X}$ and $M_{M,X}$.
\end{proposition}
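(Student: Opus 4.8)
The plan is to mirror the proof of Proposition~\ref{p_Afbco}, verifying directly that the stated expression for $D\mathcal{F}_M$ satisfies the definition of Fréchet differentiability in \cite[Definition 1.1.1]{ampr95}. I would first fix $r\in(0,\omega^{\ast})$ small enough that on $b((v^{\ast},u^{\ast},\omega^{\ast}),r)$ the value $\hat\omega$ stays positive and bounded away from $0$, so that $s_{\hat\omega}$, the ratio $\tau/\hat\omega\le 1$ and the quadrature domain $[-\tau/\hat\omega,0]\subset[-1,0]$ are all well defined and every evaluation point $t+\eta_i$ remains in $[-1,1]$. It then suffices to show that, as $(v,u,\omega)\to 0$ with $\omega\ge 0$,
\begin{equation*}
\bigl\|\mathcal{F}_M(\hat v+v,\hat u+u,\hat\omega+\omega)-\mathcal{F}_M(\hat v,\hat u,\hat\omega)-D\mathcal{F}_M(\hat v,\hat u,\hat\omega)(v,u,\omega)\bigr\|_{\mathbb{U}}=o\bigl(\|(v,u,\omega)\|\bigr)
\end{equation*}
uniformly in $t\in[0,1]$. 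The decisive simplification with respect to Proposition~\ref{p_Afbco} is that $\mathcal{F}_M$ is built from the \emph{finite} sums $\tilde K_M,\tilde H_M$ of~\eqref{quadrulek} rather than from the integrals $\tilde K,\tilde H$: a finite sum is differentiated by the pointwise chain rule alone, so none of the care needed to differentiate under the integral sign (as in~\eqref{Mco1}) is required here, and in particular no boundary term of Leibniz type need be accounted for.

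The core of the argument is the chain and product rules. By~\ref{N4} the outer maps $\tilde F_M,\tilde G_M$ are differentiable, with partial derivatives $D_1\tilde F_M,D_2\tilde F_M$ and their $\tilde G_M$ analogues; and the inner object $\tilde K_M(v_{(\cdot)}\circ s_\omega)$ from~\eqref{quadrulek} is a finite sum of evaluations of $K$, hence differentiated termwise using the regularity of $K$ in~\ref{T3}. Differentiation in the direction $v$ produces, summand by summand, the operator $L_{M,X}$ (a finite combination of $D_2K$ evaluations), while differentiation from the right in $\omega$ produces $M_{M,X}$, collecting the contributions of the explicit prefactor $\omega$ and of the argument $\omega\eta_i$ through $D_1K$ --- the discrete counterparts of the first and third terms of the continuous $M_X$, matching the RE-only constructions of~\cite{ab22RE}. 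The separate summand $-D_2\tilde F_M(\cdots)\cdot s_{\hat\omega}/\hat\omega$ comes, exactly as the $w'$ term in~\eqref{Mco1}, from the explicit $\omega$-dependence of the direct argument $w_{(\cdot)}\circ s_\omega$ through $s_\omega$; what makes this legitimate is that this direct argument is the RFDE component of $v$, which lies in $B^{1,\infty}$ and is thus differentiable, so the resulting term lands in $\mathbb{U}=B^\infty$. For the second component the extra factor $\omega$ in $\omega G_M$ contributes, through the product rule, the additional summand $G_M(v_{(\cdot)}\circ s_\omega)$ seen in $\mathfrak{M}_{M,Y}$. Linearity and boundedness of $D\mathcal{F}_M(\hat v,\hat u,\hat\omega)$ in $(v,u,\omega)$ then follow immediately from the finiteness of the sums, together with~\ref{T1}--\ref{T3} and~\ref{N4}.

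The main obstacle, as already in the continuous case, is the one-sided differentiation in $\omega$: one must track every occurrence of $\omega$ --- the prefactor, the kernel argument $\omega\eta_i$, and, should the quadrature nodes be taken $\omega$-dependent, the node locations themselves --- and then prove that the corresponding remainder is genuinely $o(|\omega|)$ uniformly in $t$, which is where the continuity of the partial derivatives of $K$ and of $\tilde F_M,\tilde G_M$ is used to pass the difference quotients to the limit. Restricting to $b((v^{\ast},u^{\ast},\omega^{\ast}),r)$ is precisely what keeps $\omega>0$ throughout and allows all these quantities to be bounded uniformly, while the right-sidedness in $\omega$ is inherited verbatim from Proposition~\ref{p_Afbco}. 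Finally, the verification of $\mathfrak{L}_{M,Y}$ and $\mathfrak{M}_{M,Y}$ is identical upon replacing $\tilde F_M,\tilde K_M,D_iK$ by $\tilde G_M,\tilde H_M,D_iH$, so the computation need only be carried through once.
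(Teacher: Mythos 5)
Your overall strategy---repeat the proof of Proposition~\ref{p_Afbco} with $\mathcal{F}_M$ in place of $\mathcal{F}$, verifying \cite[Definition 1.1.1]{ampr95} directly---is exactly the paper's (its proof is precisely this one-line reduction), but your treatment of the $\omega$-derivative has a genuine gap. You claim that, the sums being finite, ``no boundary term of Leibniz type need be accounted for,'' and you mention the $\omega$-dependence of the node locations only as a hypothetical (``should the quadrature nodes be taken $\omega$-dependent''). In the paper's setup this is not hypothetical: Section~\ref{s_piecewise} states explicitly that the nodes $\eta_i$ in \eqref{quadrulek} vary together with $\omega$, as they must, since they discretize the moving interval $[-\tau/\omega,0]$. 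With the standard affine rescaling $\eta_i(\omega)=-(\tau/\omega)\xi_i$, $w_i(\omega)=(\tau/\omega)\tilde w_i$ for fixed reference nodes and weights $\xi_i,\tilde w_i$, the products $\omega w_i(\omega)$ and $\omega\eta_i(\omega)$ are constant in $\omega$; hence the two contributions you do collect (the prefactor term and the $D_1K$ term) cancel identically, and the \emph{entire} right $\omega$-derivative of \eqref{quadrulek} comes from the motion of the evaluation points $v(t+\eta_i(\omega))$, $w(t+\eta_i(\omega))$---the discrete counterpart of exactly the Leibniz boundary term you dismissed. What you differentiate is in effect an operator with $\omega$-independent nodes and weights, which is not the operator defined in \eqref{quadrulek}.

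This is not a technicality, because the node-motion term has the form $\sum_i w_i D_2K(\omega\eta_i,\cdot)[(v'(t+\eta_i),w'(t+\eta_i))]\,\omega\eta_i'(\omega)$: it requires the derivative of the state \emph{at the quadrature nodes}, including the RE component, which for a generic element of $\mathbb{V}$ is only $B^{\infty}$ under \ref{T2}. This is precisely the regularity obstruction emphasized in the proof of Proposition~\ref{p_Afbco}; in the continuous case it is circumvented by the change of variables in \eqref{hprescaled}, which moves all the $\omega$-dependence out of the arguments of $v$ and produces the $v'$-free operator $M_X$, but no such change of variables is available for a finite sum. It is also the reason Proposition~\ref{p_AfMbco} is localized to a ball $b((v^{\ast},u^{\ast},\omega^{\ast}),r)$ around the (smooth) fixed point, whereas Proposition~\ref{p_Afbco} holds at every $(\hat v,\hat u,\hat\omega)\in\mathbb{V}\times\mathbb{U}\times(0,+\infty)$; your reading of the ball as merely keeping $\hat\omega$ positive and quantities bounded cannot be the point, since positivity is already encoded in $\hat\omega\in(0,+\infty)$ in the continuous statement. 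A correct proof must confront this term---this is what the formulas for $L_{M,X}$ and $M_{M,X}$ imported from the RE case of \cite{ab22RE} do, and it is where the $w'$-type factors (note the paper's $[v'_t\circ s_\omega]$ should read $[w'_t\circ s_\omega]$, as in \eqref{Mco1}) and the restriction to a neighborhood of $(v^{\ast},u^{\ast},\omega^{\ast})$ actually earn their keep.
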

\begin{proof}The proposition can be proved as Proposition \ref{p_Afbco}, by replacing $\mathcal{F}$ in the first of \eqref{Phi} with $\mathcal{F}_M$.\end{proof}

\bigskip
For the remaining numerical assumptions, it is useful to define $\Psi_{L,M}:\mathbb{U}\times\mathbb{A}\times\mathbb{B}\rightarrow\mathbb{U}\times\mathbb{A}\times\mathbb{B}$ as
\begin{equation}\label{PsiLM}
\Psi_{L,M}:=I_{\mathbb{U}\times\mathbb{A}\times\mathbb{B}}-P_{L}R_{L}\Phi_{M}.
\end{equation}

\bigskip
The second numerical assumption in \cite{mas15NM} is CS1 (page 536), which is somehow the discrete version of A$x^{\ast}1$ therein, here Proposition \ref{p_Ax*1co}. With respect to its validity, the following holds.
\begin{proposition}\label{p_CS1co}
Under \ref{T1}-\ref{T4}, \ref{N1}-\ref{N4} and \ref{N6}, there exist $r_{1}\in(0,\omega^{\ast})$ and $\kappa\geq0$ such that
\begin{equation*}
\setlength\arraycolsep{0.1em}\begin{array}{rcl}
\|D\Psi_{L,M}(u,\alpha,\omega)&-&D\Psi_{L,M}(u^{\ast},\alpha^{\ast},\omega^{\ast})\|_{\mathbb{U}\times\mathbb{A}\times\mathbb{B}\leftarrow\mathbb{U}_{}\times\mathbb{A}\times(0,+\infty)}\\[2mm]
&\leq&\kappa\|(u,\alpha,\omega)-(u^{\ast},\psi^{\ast},\alpha^{\ast})\|_{\mathbb{U}\times\mathbb{A}\times\mathbb{B}}
\end{array}
\end{equation*}
for all $(u,\psi,\omega)\in b((u^{\ast},\psi^{\ast},\omega^{\ast}),r_{1})$ and for all positive integers $L$ and $M$.
\end{proposition}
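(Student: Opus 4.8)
The plan is to exploit the linearity of the discretization operators in \eqref{PsiLM} in order to reduce the statement to a Lipschitz estimate on the continuous derivative $D\Phi_{M}$ that is uniform in $M$. Since $I_{\mathbb{U}\times\mathbb{A}\times\mathbb{B}}$ has constant (identity) derivative, from \eqref{PsiLM} one gets, at any two base points,
\begin{equation*}
D\Psi_{L,M}(u,\alpha,\omega)-D\Psi_{L,M}(u^{\ast},\alpha^{\ast},\omega^{\ast})=-P_{L}R_{L}\bigl[D\Phi_{M}(u,\alpha,\omega)-D\Phi_{M}(u^{\ast},\alpha^{\ast},\omega^{\ast})\bigr],
\end{equation*}
so it suffices to bound $\|P_{L}R_{L}\|$ uniformly in $L$ and to prove that $D\Phi_{M}$ (which exists, from the right in $\omega$, by Proposition \ref{p_AfMbco}) is locally Lipschitz at $(u^{\ast},\alpha^{\ast},\omega^{\ast})$ uniformly in $M$. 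The first point follows from \ref{N1}--\ref{N2}: since $m$ is fixed, the prolongations $\pi_{L}^{\pm}$ have operator norm equal to the Lebesgue constant of the $m$-degree interpolation on the reference interval with abscissae $c_{0},\ldots,c_{m}$, which is independent of $L$, while the restrictions $\rho_{L}^{\pm}$ are nonexpansive in the sup-norm; hence $\|P_{L}R_{L}\|\leq C$ with $C$ independent of $L$.

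For the second point, note that the third and fourth components of $\Phi_{M}$ in \eqref{PhiMco}, namely $\mathcal{G}(u,\alpha)_{1}$ and $\omega-p(\mathcal{G}(u,\alpha)|_{[0,1]})$, are \emph{linear} in $(u,\alpha,\omega)$ by the linearity of $\mathcal{G}$ and $p$; their derivatives are therefore constant and drop out of the difference above. The surviving contribution comes from composing $\mathcal{F}_{M}$ with the linear bounded map $(u,\alpha,\omega)\mapsto(\mathcal{G}(u,\alpha),\omega)$, boundedness of $\mathcal{G}$ being Proposition \ref{l_G}. Since this map sends $b((u^{\ast},\alpha^{\ast},\omega^{\ast}),r)$ into a bounded neighbourhood of $(v^{\ast},\omega^{\ast})$, with $v^{\ast}:=\mathcal{G}(u^{\ast},\alpha^{\ast})$, it is enough to prove that $D\mathcal{F}_{M}$ is locally Lipschitz at $(v^{\ast},u^{\ast},\omega^{\ast})$ with an $M$-independent constant.

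To this end I would use the explicit form of $D\mathcal{F}_{M}$ from Proposition \ref{p_AfMbco} and estimate each factor in $\mathfrak{L}_{M}$ and $\mathfrak{M}_{M}$ by the product rule for Lipschitz maps on bounded sets, mirroring the proof of Proposition \ref{p_Ax*1co} with $\tilde F,\tilde G,\tilde K,\tilde H$ replaced by $\tilde F_{M},\tilde G_{M},\tilde K_{M},\tilde H_{M}$. The outer factors $D_{1}\tilde F_{M},D_{2}\tilde F_{M},D_{1}\tilde G_{M},D_{2}\tilde G_{M}$ are Lipschitz with an $M$-independent constant by \ref{N6}, and bounded near the fixed point. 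The inner quadrature objects $\tilde K_{M},\tilde H_{M},L_{M},M_{M}$ are finite sums of evaluations of $K,H,D_{1}K,D_{2}K,D_{1}H,D_{2}H$ weighted by $w_{0},\ldots,w_{M}$; their Lipschitz continuity and boundedness in $(v,\omega)$, uniformly in $M$, follow from the regularity in \ref{T3} and \ref{T5} together with the fact that $\sum_{i=0}^{M}|w_{i}|$ is bounded uniformly in $M$. This last fact is a consequence of the convergence assumption \ref{N3}: for interpolatory rules, convergence on continuous functions forces a uniform bound on the sum of the absolute values of the weights (a uniform boundedness argument, i.e.\ P\'olya's criterion).

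The delicate term, as already in Proposition \ref{p_Afbco}, is the one arising from right differentiation with respect to $\omega$, which contains the derivative of the RFDE state as a factor (through $D_{2}\tilde F_{M}$ and $D_{2}\tilde G_{M}$). The point is that this state component ranges in $B^{1,\infty}$ by \ref{T2}, so over the ball the factor stays bounded and depends on $v$ through a bounded linear map, hence it does not spoil the product estimate; this is precisely where the extra regularity of the fixed point, guaranteed by the structural assumptions \eqref{hpcoupled1}--\eqref{hpcoupled2}, enters, exactly as in Proposition \ref{p_Ax*1co}. Collecting the estimates yields an $M$-independent Lipschitz constant for $D\mathcal{F}_{M}$, hence for $D\Phi_{M}$, and combining with the uniform bound on $\|P_{L}R_{L}\|$ gives the claim with $r_{1}\in(0,\omega^{\ast})$ and $\kappa$ independent of $L$ and $M$. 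I expect the main obstacle to be making the uniformity in $M$ rigorous, namely extracting the uniform weight bound from \ref{N3} and checking that the state-derivative factor is controlled uniformly, rather than any single hard estimate.
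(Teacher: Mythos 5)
Your overall architecture coincides with that of the paper, whose proof simply combines those of \cite[Proposition 3.7]{ab20mas} and \cite[Proposition 3.7]{ab22RE}: since $P_{L}R_{L}$ is linear and bounded, \eqref{PsiLM} gives
\begin{equation*}
D\Psi_{L,M}(u,\alpha,\omega)-D\Psi_{L,M}(u^{\ast},\alpha^{\ast},\omega^{\ast})=-P_{L}R_{L}\bigl[D\Phi_{M}(u,\alpha,\omega)-D\Phi_{M}(u^{\ast},\alpha^{\ast},\omega^{\ast})\bigr],
\end{equation*}
the linear components of $\Phi_{M}$ drop out, and the statement reduces to an $L$-uniform bound on $\|P_{L}R_{L}\|$ together with an $M$-uniform local Lipschitz estimate for $D\mathcal{F}_{M}$ near the fixed point; the latter you obtain, as in the cited proofs, from \ref{N6}, from the Banach--Steinhaus (P\'olya) bound on $\sum_{i}|w_{i}|$ implied by \ref{N3}, and from the extra regularity of the fixed point to control the state-derivative and rescaling terms. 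All of this is the right reconstruction of the argument the paper refers to.

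The one step that fails as written is your justification of the $L$-uniform bound on $\|P_{L}R_{L}\|$. On $\mathbb{U}$ (sup norm) the Lebesgue-constant argument is fine, but the second component of $\mathbb{A}$ carries the norm $\|\psi\|_{B^{1,\infty}}=\|\psi\|_{\infty}+\|\psi'\|_{\infty}$ (see \ref{T2} and the proof of Theorem \ref{t_epsL}), and there the prolongation $\pi_{L}^{-}$, viewed as a map from nodal values (max norm) into $B^{1,\infty}$, does \emph{not} have norm $\Lambda_{m}$: a nodal vector of unit size can produce an interpolant whose derivative is of order $m^{2}/h$ (Markov's inequality is sharp), so the factorized estimate ``norm of prolongation times norm of restriction'' blows up as $L\to\infty$. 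The composition $\pi_{L}^{-}\rho_{L}^{-}$ \emph{is} uniformly bounded, but the proof must exploit the coherence of the nodal values of a $B^{1,\infty}$ function: on each interval $[t_{i-1}^{-},t_{i}^{-}]$ write $\psi=\psi(t_{i-1}^{-})+g$ with $g(t)=\int_{t_{i-1}^{-}}^{t}\psi'(s)\dd s$, use that interpolation reproduces constants, bound the interpolant $q$ of $g$ by $\|q\|_{\infty}\leq\Lambda_{m}h\|\psi'\|_{\infty}$, and apply Markov's inequality on a length-$h$ interval to get $\|q'\|_{\infty}\leq 2m^{2}\Lambda_{m}\|\psi'\|_{\infty}$, whence $\|\pi_{L}^{-}\rho_{L}^{-}\psi\|_{B^{1,\infty}}\leq C_{m}\|\psi\|_{B^{1,\infty}}$ with $C_{m}$ independent of $L$. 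This is precisely the role of the constant $\Lambda'_{m}$ appearing in the paper's consistency bound. With this repair (or by quoting the corresponding lemma of \cite{ab20mas}) your proposal is complete.
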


\begin{proof} The proof is obtained by putting together that of \cite[Proposition 3.7]{ab20mas} and that of \cite[Proposition 3.7]{ab22RE}.\end{proof}

\bigskip
The last numerical assumption in \cite{mas15NM} (CS2, page 537), can be seen as the discrete version of A$x^{\ast}2$ therein, here Proposition \ref{p_Ax*2co}. With respect to its validity, the following holds.
\begin{proposition}\label{p_CS2b}
Under \ref{T1}-\ref{T5}, \ref{N1}-\ref{N5}, \ref{N7} and \ref{N8}, the operator $D\Psi_{L,M}(u^{\ast},\alpha^{\ast},\omega^{\ast})$ is invertible and its inverse is uniformly bounded with respect to both $L$ and $M$. Moreover,
\begin{equation*}
\setlength\arraycolsep{0.1em}\begin{array}{rcl}
\displaystyle\lim_{L,M\rightarrow\infty}&&\displaystyle\frac{1}{r_{2}(L,M)}\|[D\Psi_{L,M}(u^{\ast},\alpha^{\ast},\omega^{\ast})]^{-1}\|_{\mathbb{U}\times\mathbb{A}\times\mathbb{B}\leftarrow\mathbb{U}\times\mathbb{A}\times\mathbb{B}}\\[3mm]
&&\cdot\|\Psi_{L,M}(u^{\ast},\alpha^{\ast},\omega^{\ast})\|_{\mathbb{U}\times\mathbb{A}\times\mathbb{B}}=0,
\end{array}
\end{equation*}
where
\begin{equation*}
r_{2}(L,M):=\min\left\{r_{1},\frac{1}{2\kappa\|[D\Psi_{L,M}(u^{\ast},\alpha^{\ast},\omega^{\ast})]^{-1}\|_{\mathbb{U}\times\mathbb{A}\times\mathbb{B}\leftarrow\mathbb{U}\times\mathbb{A}\times\mathbb{B}}}\right\}
\end{equation*}
with $r_{1}$ and $\kappa$ as in Proposition \ref{p_CS1co}.
\end{proposition}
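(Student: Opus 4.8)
The plan is to follow the pattern already used for the preceding numerical assumptions, namely to merge the CS2 arguments of the RFDE case \cite{ab20mas} and the RE case \cite{ab22RE} while tracking the coupling. The statement splits into two claims: the uniform invertibility of $D\Psi_{L,M}(u^{\ast},\alpha^{\ast},\omega^{\ast})$ with uniformly bounded inverse, and the asymptotic estimate on the product involving $\|\Psi_{L,M}(u^{\ast},\alpha^{\ast},\omega^{\ast})\|$. I would establish them in this order, since the limit rests on the uniform bound coming from the invertibility.

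For the invertibility, set $A^{\ast}:=D\Phi(u^{\ast},\alpha^{\ast},\omega^{\ast})$, so that $D\Psi_{L,M}(u^{\ast},\alpha^{\ast},\omega^{\ast})=I-P_{L}R_{L}D\Phi_{M}(u^{\ast},\alpha^{\ast},\omega^{\ast})$, and regard it as a perturbation of $I-A^{\ast}$, which is invertible by Proposition \ref{p_Ax*2co}. I would split the perturbation as
\[
A^{\ast}-P_{L}R_{L}D\Phi_{M}(u^{\ast},\alpha^{\ast},\omega^{\ast})=(I-P_{L}R_{L})A^{\ast}+P_{L}R_{L}\bigl(A^{\ast}-D\Phi_{M}(u^{\ast},\alpha^{\ast},\omega^{\ast})\bigr)
\]
and estimate the two terms separately. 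The second tends to zero in operator norm as $M\to\infty$ by \ref{N7} and \ref{N8}, since $P_{L}R_{L}$ is uniformly bounded (its per-element Lebesgue constant being fixed) and $D\Phi_{M}\to D\Phi$ there. Granting that the first term also vanishes in norm as $L\to\infty$, the Banach perturbation lemma gives, for $L,M$ large, invertibility of $D\Psi_{L,M}(u^{\ast},\alpha^{\ast},\omega^{\ast})$ with $\|[D\Psi_{L,M}(u^{\ast},\alpha^{\ast},\omega^{\ast})]^{-1}\|\le 2\|(I-A^{\ast})^{-1}\|$, hence a uniform bound.

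The norm convergence $\|(I-P_{L}R_{L})A^{\ast}\|\to 0$ is the main obstacle, and it is precisely where the coupling must be handled. I would argue that $A^{\ast}$ maps the unit ball of $\mathbb{U}\times\mathbb{A}\times\mathbb{B}$ into a set of functions of uniformly higher regularity, on which piecewise interpolation converges uniformly. For the RFDE ($Y$) block the smoothing comes from $\mathcal{G}$, whose $Y$-component integrates $u_{Y}$ and therefore returns a $C^{1}$ function; for the RE ($X$) block it comes from the integral structure of $\mathfrak{L}_{X}^{\ast}$ through the kernels $K,H$ (cf.\ \eqref{Lco1}), exactly as in \cite{ab22RE}. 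As stressed after Proposition \ref{p_Afbco}, the delicate point relative to the RFDE-only analysis is that the first component of $\mathbb{V}$ is no more regular than that of $\mathbb{U}$, so the uniform higher regularity of the image of $A^{\ast}$ must be extracted from \eqref{hpcoupled1}--\eqref{hpcoupled2} rather than taken for granted; assembling the two blocks together with the derivative terms $\mathfrak{M}^{\ast}$ then yields the uniform norm convergence.

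For the asymptotic estimate, I would use that $(u^{\ast},\alpha^{\ast},\omega^{\ast})$ is a fixed point of $\Phi$ to write
\[
\Psi_{L,M}(u^{\ast},\alpha^{\ast},\omega^{\ast})=(I-P_{L}R_{L})\Phi(u^{\ast},\alpha^{\ast},\omega^{\ast})+P_{L}R_{L}\bigl(\Phi-\Phi_{M}\bigr)(u^{\ast},\alpha^{\ast},\omega^{\ast}).
\]
The first summand is the interpolation error of $\Phi(u^{\ast},\alpha^{\ast},\omega^{\ast})=(u^{\ast},\alpha^{\ast},\omega^{\ast})$, which vanishes as $L\to\infty$ because $u^{\ast}$ and $\alpha^{\ast}$ lie in more regular subspaces by \eqref{hpcoupled1}--\eqref{hpcoupled2}; the second vanishes as $M\to\infty$ by \ref{N7} together with the uniform boundedness of $P_{L}R_{L}$. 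Hence $\|\Psi_{L,M}(u^{\ast},\alpha^{\ast},\omega^{\ast})\|\to 0$. Finally, the uniform bound on the inverse forces $r_{2}(L,M)$ to be bounded below by a positive constant, so $1/r_{2}(L,M)$ and $\|[D\Psi_{L,M}(u^{\ast},\alpha^{\ast},\omega^{\ast})]^{-1}\|$ stay bounded while $\|\Psi_{L,M}(u^{\ast},\alpha^{\ast},\omega^{\ast})\|\to 0$, which gives the stated limit.
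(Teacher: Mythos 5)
Your consistency step is fine: the decomposition $\Psi_{L,M}(u^{\ast},\alpha^{\ast},\omega^{\ast})=(I-P_{L}R_{L})\Phi(u^{\ast},\alpha^{\ast},\omega^{\ast})+P_{L}R_{L}(\Phi-\Phi_{M})(u^{\ast},\alpha^{\ast},\omega^{\ast})$ and the final limit logic coincide with the paper's third step. The genuine gap is in your stability argument: the claim $\|(I-P_{L}R_{L})A^{\ast}\|\to 0$ in operator norm is false, so the Banach perturbation lemma cannot be invoked this way. The smoothing you describe concerns only the $\mathcal{F}$-row of $A^{\ast}=D\Phi(u^{\ast},\alpha^{\ast},\omega^{\ast})$, but $D\Phi$ also has the boundary row: by \eqref{Phi}, \eqref{Bco} and \eqref{G2}, since $\mathcal{G}(u,\alpha)_{0}=\alpha$, the $\mathbb{A}$-component of $\Phi$ is $(u,\alpha,\omega)\mapsto\mathcal{G}(u,\alpha)_{1}$, a linear map equal to its own derivative, whose $X$-part is the pure shift $\theta\mapsto u_{X}(1+\theta)$ with no gain of regularity whatsoever. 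Concretely, for each $L$ take $u_{Y}=0$, $\alpha=0$, $\omega=0$ and $u_{X}\in B^{\infty}([0,1],\R^{d})$ with $\|u_{X}\|_{\infty}=1$ vanishing at all points $1+t_{i,j}^{-}$ and at $0$; then $\rho_{L}^{-}\mathcal{G}(u,0)_{1}=0$, hence $\|(I-P_{L}R_{L})A^{\ast}(u,0,0)\|\geq\|\mathcal{G}(u,0)_{1}\|_{\mathbb{A}}=1$ while $\|(u,0,0)\|=1$, so $\|(I-P_{L}R_{L})A^{\ast}\|\geq 1$ for every $L$. (A related obstruction sits even in the first row: $D_{2}\tilde F$, $D_{2}\tilde G$ act on $w_{t}\circ s_{\omega^{\ast}}$ in the $B^{1,\infty}$-pairing, and $w'=u_{Y}$ enters by translation, again without smoothing.) A secondary weakness: \ref{N7}--\ref{N8} control $F_{M},G_{M}$ and their derivatives only at the fixed-point arguments, whereas your second term needs operator-norm convergence of $D\Phi_{M}(u^{\ast},\alpha^{\ast},\omega^{\ast})$ to $A^{\ast}$, i.e., uniformity over the whole unit ball, which the pointwise quadrature convergence in \ref{N3} does not give.

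This non-compactness of the shift blocks is exactly why the paper does not attempt a norm-convergence perturbation argument. Its proof instead exploits the structure of the linearized discrete system: the Volterra part (which is always invertible on bounded intervals) is eliminated, so that invertibility of $D\Psi_{L,M}(u^{\ast},\alpha^{\ast},\omega^{\ast})$ hinges on a discretized monodromy operator acting on the initial data, and one then invokes the spectral convergence of discretized evolution operators for coupled RE/RFDE systems (\cite[Theorem 1]{bl20b} for invertibility at large $L,M$, \cite[Section 4.3]{bl20b} together with \cite[Lemma 3.12]{ab20mas} for the uniform bound on the inverses, with the RE-specific observations of \cite[Proposition 3.8]{ab22RE}); it is there, not in a Neumann-series estimate, that the hyperbolicity hypothesis of Proposition \ref{p_Ax*2co} (simplicity of the eigenvalue $1$ of $T^{\ast}(1,0)$) does its work. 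To repair your proof you would need to replace the claim $\|(I-P_{L}R_{L})A^{\ast}\|\to 0$ by this kind of reduction (or by a collectively-compact-type argument applied after factoring out the shift), since as stated that step fails.
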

\begin{proof}The first step of the proof concerns the invertibility of the operator $D\Psi_{L,M}(u^*,\alpha^*,\omega^*)$ defined in \eqref{PsiLM} for $L,M$ large enough, and follows from \cite[Theorem 1]{bl20b}. The second step concerns the uniform boundedness of the inverse $D\Psi_{L,M}^{-1}(u^*,\alpha^*,\omega^*)$ and follows from \cite[Section 4.3]{bl20b} and \cite[Lemma 3.12]{ab20mas}, while keeping in mind the observations in \cite[Proposition 3.8]{ab22RE} concerning the RE components. The third and last step consists in proving that $\Psi_{L,M}(u^*,\alpha^*,\omega^*)$ vanishes and goes as the proof of \cite[Proposition 3.13]{ab20mas}.\end{proof}
\subsection{Final convergence results}
\label{s_convresult}
From the propositions in the previous subsections we can conclude that our problem of finding a fixed point of $\Phi$ in \eqref{Phi} satisfies all the assumptions required by \cite{mas15NM} under certain hypotheses on the state spaces, the discretization and the regularity of the right-hand sides. As a consequence, the relevant FEM converges.
\begin{theorem}[\protect{\cite[Theorem 2, page 539]{mas15NM}}] Under \ref{T1}-\ref{T6} and \ref{N1}-\ref{N8} there exists a positive integer $\hat{N}$ such that for all $L,M\geq\hat{N}$ the operator $R_{L}\Phi_{M}P_{L}$ has a fixed point $(u_{L,M}^{\ast},\alpha_{L,M}^{\ast},\omega_{L,M}^{\ast})$ and
\begin{equation*}
\setlength\arraycolsep{0.1em}\begin{array}{rcl}
\varepsilon_{L,M}&:=&\|(v_{L,M}^{\ast},\omega_{L,M}^{\ast})-(v^{\ast},\omega^{\ast})\|_{\mathbb{V}\times\mathbb{B}}\\[2mm]
&\leq&2\cdot\|[D\Psi_{L,M}(u^{\ast},\alpha^{\ast},\omega^{\ast})]^{-1}\|_{\mathbb{U}\times\mathbb{A}\times\mathbb{B}\leftarrow\mathbb{U}\times\mathbb{A}\times\mathbb{B}}\\[2mm]
&&\cdot\|\Psi_{L,M}(u^{\ast},\alpha^{\ast},\omega^{\ast})\|_{\mathbb{U}\times\mathbb{A}\times\mathbb{B}},
\end{array}
\end{equation*}
where $v_{L,M}^*=\mathcal{G}(u_{L,M}^*,\alpha_{L,M}^*)$ and $v^*=\mathcal{G}(u^*,\alpha^*)$.
\end{theorem}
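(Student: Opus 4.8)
The plan is to recognize the statement as a verbatim instance of the abstract convergence result \cite[Theorem 2]{mas15NM}, applied to the PAF \eqref{PAF}--\eqref{Phi} associated with \eqref{convBVPcoupled} through the operators \eqref{G2}, \eqref{Fco}, \eqref{Bco} and their discretized counterparts \eqref{PhiMco}--\eqref{eq:philm}. Consequently, the proof requires no new estimate: the whole task reduces to certifying that every hypothesis of \cite[Theorem 2]{mas15NM} is in force in our setting, after which the conclusion---existence of the discrete fixed point $(u_{L,M}^{\ast},\alpha_{L,M}^{\ast},\omega_{L,M}^{\ast})$ for $L,M\geq\hat N$ together with the displayed bound on $\varepsilon_{L,M}$---follows at once by quoting the theorem.

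First I would separate the hypotheses of \cite{mas15NM} into the theoretical block (A$\mathfrak{F}\mathfrak{B}$, A$\mathfrak{G}$, A$x^{\ast}1$, A$x^{\ast}2$) and the numerical block (A$\mathfrak{F}_K\mathfrak{B}_K$, CS1, CS2), and match each to a proposition already established. For the theoretical block: A$\mathfrak{F}\mathfrak{B}$ is settled by Proposition \ref{p_Afbco} for the $\mathcal{F}$-part together with the linearity, hence Fr\'echet-differentiability, of $\mathcal{B}$ in \eqref{Bco}; A$\mathfrak{G}$ is Proposition \ref{l_G}; A$x^{\ast}1$ is Proposition \ref{p_Ax*1co}; and A$x^{\ast}2$ is Proposition \ref{p_Ax*2co}. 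For the numerical block: A$\mathfrak{F}_K\mathfrak{B}_K$ is Proposition \ref{p_AfMbco}, again using the linearity of $\mathcal{B}$ and of $p$; CS1 is Proposition \ref{p_CS1co}; and CS2 is Proposition \ref{p_CS2b}. Collecting the hypotheses invoked by these propositions reproduces exactly \ref{T1}--\ref{T6} and \ref{N1}--\ref{N8}, which are the standing assumptions of the theorem.

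The one point deserving care is the invocation of A$x^{\ast}2$: Proposition \ref{p_Ax*2co} is conditional on the eigenvalue $1\in\sigma(T^{\ast}(1,0))$ being simple, a requirement not listed among \ref{T1}--\ref{T6} but supplied by the hyperbolicity of the periodic solution $(x^{\ast},y^{\ast})$ of \eqref{eq:coupled}. I would therefore state explicitly that hyperbolicity is assumed at the fixed point under consideration, so that the simplicity hypothesis of Proposition \ref{p_Ax*2co} holds and A$x^{\ast}2$ is genuinely available. This is also the step whose substance is hardest, since it rests on the transversality argument $\xi_1^{\ast}\notin R$ deferred to \ref{s_appendix}; at the level of the present theorem, however, nothing beyond citing Proposition \ref{p_Ax*2co} is needed.

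Finally, with A$\mathfrak{F}\mathfrak{B}$, A$\mathfrak{G}$, A$x^{\ast}1$, A$x^{\ast}2$, A$\mathfrak{F}_K\mathfrak{B}_K$, CS1 and CS2 all verified, I would apply \cite[Theorem 2]{mas15NM} directly. It furnishes the threshold $\hat N$, the existence of the fixed point $(u_{L,M}^{\ast},\alpha_{L,M}^{\ast},\omega_{L,M}^{\ast})$ of $R_L\Phi_M P_L$ for $L,M\geq\hat N$, and the quantitative estimate for $\varepsilon_{L,M}=\|(v_{L,M}^{\ast},\omega_{L,M}^{\ast})-(v^{\ast},\omega^{\ast})\|_{\mathbb{V}\times\mathbb{B}}$ in terms of $\|[D\Psi_{L,M}(u^{\ast},\alpha^{\ast},\omega^{\ast})]^{-1}\|$ and the consistency residual $\|\Psi_{L,M}(u^{\ast},\alpha^{\ast},\omega^{\ast})\|$, which is precisely the claimed bound. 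The $O(L^{-m})$ rate advertised in the introduction would then follow from CS2 (uniform boundedness of the inverse, Proposition \ref{p_CS2b}) combined with an interpolation estimate of the residual, but this quantification lies beyond the abstract statement being proved here.
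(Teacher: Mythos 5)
Your proposal is correct and takes essentially the same approach as the paper: the theorem is quoted directly from \cite[Theorem 2]{mas15NM}, with its hypotheses discharged exactly as you describe --- the theoretical block by Propositions \ref{p_Afbco}, \ref{l_G}, \ref{p_Ax*1co}, \ref{p_Ax*2co} (plus linearity of $\mathcal{B}$) and the numerical block by Propositions \ref{p_AfMbco}, \ref{p_CS1co}, \ref{p_CS2b}. Your explicit flagging that invoking Proposition \ref{p_Ax*2co} requires the simplicity of the multiplier $1$ (i.e., hyperbolicity), which is not listed among \ref{T1}--\ref{T6}, is a point the paper leaves implicit, so it is a welcome clarification rather than a deviation.
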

Thanks to Proposition \ref{p_CS2b}, the error on $(v^{\ast},\omega^{\ast})$ is determined by the last factor, namely the \emph{consistency error}. For the latter, thanks to basic results on polynomial interpolation, we can write
\begin{equation*}
\|\Psi_{L,M}(u^{\ast},\alpha^{\ast},\omega^{\ast})\|_{\mathbb{U}\times\mathbb{A}\times\mathbb{B}}\leq\varepsilon_{L}+\max\{\Lambda_{m},\Lambda'_m,1\}\varepsilon_{M},
\end{equation*}
where $\Lambda_m$ is the Lebesgue constant associated to the collocation nodes and the terms
\begin{equation}\label{epsL}
\varepsilon_{L}:=\|(I_{\mathbb{U}\times\mathbb{A}\times\mathbb{B}}-P_{L}R_{L})(u^{\ast},\alpha^{\ast},\omega^{\ast})\|_{\mathbb{U}\times\mathbb{A}\times\mathbb{B}}
\end{equation}
and
\begin{equation}\label{epsM}
\varepsilon_{M}:=\|\Phi_{M}(u^{\ast},\alpha^{\ast},\omega^{\ast})-\Phi(u^{\ast},\alpha^{\ast},\omega^{\ast})\|_{\mathbb{U}\times\mathbb{A}\times\mathbb{B}}
\end{equation}
are called respectively {\it primary} and {\it secondary} consistency errors.

\bigskip
As for $\varepsilon_{L}$ in \eqref{epsL}, which concerns only the primary discretization, a bound can be obtained from the regularity of $u^{\ast}$ through the following theorem.
\begin{theorem}\label{t_epsL}
Let $K,H\in\mathcal{C}^{p}(\R\times\R^{2d},\mathbb{R}^{d})$ and $\tilde F,\tilde G\in\mathcal{C}^{p+1}(\R^d\times\mathtt{Y},\R^d)$ for some integer $p\geq0$. Then, Under \ref{T1}, \ref{T2}, \ref{N1} and \ref{N2}, it holds that $u^{\ast}\in C^{p+1}([0,1],\mathbb{R}^{2d})$, $\varphi^{\ast}\in C^{p+1}([-1,0],\mathbb{R}^{2d})$, $v^{\ast}\in C^{p+1}([-1,1],\mathbb{R}^{2d})$ and
\begin{equation}\label{epsLhco}
\varepsilon_{L}=O\left(h^{\min\{m,p+1\}}\right).
\end{equation}
\end{theorem}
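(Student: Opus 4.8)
The plan is to reduce the consistency error $\varepsilon_L$ in \eqref{epsL} to standard piecewise interpolation errors, and then to feed in the regularity of the fixed point. Since $R_L$ and $P_L$ act as the identity on the parameter space $\mathbb{B}$, the operator $I_{\mathbb{U}\times\mathbb{A}\times\mathbb{B}}-P_LR_L$ leaves the $\omega^*$-component untouched, so that $\varepsilon_L$ collapses to the two interpolation defects $\|u^*-\pi_L^+\rho_L^+u^*\|_{\mathbb{U}}$ and $\|\alpha^*-\pi_L^-\rho_L^-\alpha^*\|_{\mathbb{A}}$, namely the errors of the continuous piecewise Lagrange interpolant of degree $m$ on the meshes \eqref{outmesh+}--\eqref{inmesh+} and \eqref{outmesh-}--\eqref{inmesh-}. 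Two ingredients are thus required: the classical regularity of $u^*$ and $\alpha^*$, and the corresponding interpolation bounds. The point to keep in mind is that the $Y$-block of $\mathbb{A}$ carries the $B^{1,\infty}$ norm, so that there the interpolation error must be controlled together with its derivative, which is precisely what will lower the exponent from $m+1$ to $m$.

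For the regularity I would argue by bootstrap on the fixed-point identity $u^*=\mathcal{F}(\mathcal{G}(u^*,\alpha^*),u^*,\omega^*)$ together with the periodicity encoded in $\mathcal{G}$ and $\mathcal{B}$, adapting to the coupled setting the arguments of \cite{ab20mas} for the RFDE block and \cite{ab22RE} for the RE block. Two structural mechanisms drive the argument: the integral functionals $\tilde K,\tilde H$ in \eqref{hpcoupled2}, being moving-window integrals of $C^p$ kernels evaluated along the state, raise the regularity available from $v^*$ up to the order permitted by $K,H\in\mathcal{C}^p$ and $\tilde F,\tilde G\in\mathcal{C}^{p+1}$; and the RFDE line $(v^*_Y)'=\omega^*G(v^*_{(\cdot)}\circ s_{\omega^*})$ integrates once, so that the $Y$-block is one derivative smoother than the $X$-block (which, by contrast, is prescribed directly and hence only inherits the regularity of the right-hand side). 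Iterating these gains starting from $v^*\in\mathbb{V}$, one reaches $u^*\in C^{p+1}([0,1],\R^{2d})$ and $\alpha^*\in C^{p+1}([-1,0],\R^{2d})$, with the $Y$-components in fact of class $C^{p+2}$. Finally, because $(x^*,y^*)$ is a genuine $1$-periodic solution, all one-sided derivatives match across $t\equiv0\equiv1$, so the gluing performed by $\mathcal{G}$ yields $v^*\in C^{p+1}([-1,1],\R^{2d})$ globally rather than merely piecewise.

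With the regularity in hand the interpolation step is routine: the degree-$m$ piecewise Lagrange interpolant reproduces polynomials of degree $m$, so the classical Taylor-remainder estimates give $\|g-\pi g\|_\infty=O(h^{\min\{m+1,s\}})$ and $\|(g-\pi g)'\|_\infty=O(h^{\min\{m,s-1\}})$ for $g\in C^s$ on a mesh of width $h$. Applying the first with $s=p+1$ to the sup-norm blocks $u^*$ and $\alpha^*_X$ gives $O(h^{\min\{m+1,p+1\}})$, while applying the derivative estimate with $s=p+2$ to the $B^{1,\infty}$ block $\alpha^*_Y$ gives $O(h^{\min\{m,p+1\}})$; since $\min\{m,p+1\}\le\min\{m+1,p+1\}$ the latter dominates and yields \eqref{epsLhco}. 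The main obstacle lies entirely in the regularity step, and specifically in the coupling: evaluating $\tilde F,\tilde G$ on the state $v^*_{Y,t}\in\mathtt{Y}=B^{1,\infty}$ costs one derivative in $t$ (the translation $t\mapsto v^*_{Y,t}$ is only $C^{k-1}$ into $B^{1,\infty}$ when $v^*_Y\in C^{k}$), so the bootstrap must be organised so that the extra smoothness of the $Y$-block compensates this loss at every step. Carrying this out uniformly in $t\in[0,1]$ and consistently across the period junction is the technical heart of the proof.
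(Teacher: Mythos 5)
Your proposal is correct and follows essentially the same route as the paper: a bootstrap on the fixed-point/periodicity structure to get $u^{\ast},\alpha^{\ast},v^{\ast}\in C^{p+1}$ (with the $Y$-components one order smoother), followed by standard piecewise-interpolation estimates, where the derivative term in the $B^{1,\infty}$ norm of the $Y$-block of $\alpha^{\ast}$ is what caps the rate at $O(h^{\min\{m,p+1\}})$. The paper makes the interpolation step explicit via the Cauchy remainder (case $p\geq m$) and Jackson's theorem (case $p\leq m$) and handles the derivative block by the argument of \cite[Theorem 4.3]{ab20mas}, but these are the same estimates you invoke.
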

\begin{proof}
Recall that $v^{\ast}=\mathcal{G}(u^{\ast},\alpha^{\ast})$ satisfies \eqref{rescaledBVPcoupled}, hence its periodic extension to $[-1,\infty)$ is a periodic solution of \eqref{eq:coupled} modulo rescaling of time, and it is  bounded by (T2). Thus, if $K,H$ are continuous and $\tilde F,\tilde G$ are continuously differentiable, so are $F,G$. Then the periodic extension of $v^{\ast}$ is continuously differentiable in $[0,\infty)$, thus also in $[-1,\infty)$ by periodicity. As a consequence, if $p=0$, $v^{\ast}\in C^1([-1,1],\mathbb{R}^d)$. Since $u^*=v^*\vert_{[0,1]}$ and $\alpha^*=v^*\vert_{[-1,1]}$, we immediately have also $u^{\ast}\in C^{1}([0,1],\mathbb{R}^{d})$ and $\alpha^*\in C^1([-1,0],\mathbb{R}^d)$. The whole reasoning can be iterated, proving the first part of the result.

To prove \eqref{epsLhco}, we observe first that 
\begin{equation}\label{epsLu1}
\|u^{\ast}-\pi_{L}^{+}\rho_{L}^{+}u^{\ast}\|_{\mathbb{U}}\leq\frac{\|{u^{\ast}}^{(m+1)}\|_{\infty}}{(m+1)!}\cdot h^{m+1}
\end{equation}
holds if $p\geq m$, while
\begin{equation}\label{epsLu2}
\|u^{\ast}-\pi_{L}^{+}\rho_{L}^{+}u^{\ast}\|_{\mathbb{U}}\leq(1+\Lambda_{m})\left(\frac{h}{2}\right)^{p+1}\frac{c_{p+1}}{m^{p+1}}\cdot\|{u^{\ast}}^{(p+1)}\|_{\infty}
\end{equation}
holds if $p\leq m$, with $c_{p+1}$ a positive constant independent of $m$. \eqref{epsLu1} is a direct consequence of the standard Cauchy interpolation reminder, see, e.g., \cite[Section 6.1, Theorem 2]{kc02}. \eqref{epsLu2} is a direct consequence of Jackson's theorem on best uniform approximation, see, e.g., \cite[(2.9) and (2.11)]{mas15I}. Recalling the definition of $\mathbb{A}$ in \ref{T2}, the same arguments hold for the first $d$ components of $\alpha^*$. However, recalling that $\|\psi\|_{B^{1,\infty}}=\|\psi\|_{\infty}+\|\psi'\|_{\infty}$, denoting by $\psi^*$ the last $d$ components of $\alpha^*$ and arguing as in \cite[Theorem 4.3]{ab20mas}, we get
%\begin{equation*}
%\|\psi^{\ast}-\pi_{L}^{-}\rho_{L}^{-}\psi^{\ast}\|_{\infty}\leq\frac{\|{\psi^{\ast}}^{(m+1)}\|_{\infty}}{(m+1)!}\cdot h^{m+1}
%\end{equation*}
$\|(\psi^{\ast}-\pi_{L}^{-}\rho_{L}^{-}\psi^{\ast})'\|_{\infty}=O\left(h^{\min\{m,p+1\}}\right)$, from which \eqref{epsLhco} follows.

%holds if $p\geq m$, while
%\begin{equation*}
%\|\psi^{\ast}-\pi_{L}^{-}\rho_{L}^{-}\psi^{\ast}\|_{\infty}\leq(1+\Lambda_{m})\left(\frac{h}{2}\right)^{p+1}\frac{c_{p}'}{m^{p+1}}\cdot\|{\psi^{\ast}}^{(p+1)}\|_{\infty}
%\end{equation*}
%holds if $p\leq m$, with $c_{p}'$ a positive constant independent of $m$. On the other hand,
%\begin{equation}\label{epsLpsi1'}
%\|(\psi^{\ast}-\pi_{L}^{-}\rho_{L}^{-}\psi^{\ast})'\|_{\infty}\leq\frac{\|{\psi^{\ast}}^{(m+1)}\|_{\infty}}{m!}\cdot h^{m}
%\end{equation}
%holds if $p\geq m$, while
%\begin{equation}\label{epsLpsi2'}
%\|(\psi^{\ast}-\pi_{L}^{-}\rho_{L}^{-}\psi^{\ast})'\|_{\infty}\leq\Lambda_{m}\left(\frac{h}{2}\right)^{p}\frac{c_{p}''}{m^{p-1}}\cdot\|{\psi^{\ast}}^{(p+1)}\|_{\infty}
%\end{equation}
%holds if $p\leq m$, with $c_{p}''$ a positive constant independent of $m$. In particular, \eqref{epsLpsi1'} follows by the Cauchy interpolation reminder (Theorem \ref{cauchyerror}), after taking the first derivative of the reminder itself. Moreover, \eqref{epsLpsi2'} follows similarly to \eqref{p-r--I'} in the proof of Lemma \ref{l_p-r--I} thanks to \cite[(12) at page 331]{mo01} and Theorem \ref{jackson}.
\end{proof}
\bigskip
On the other hand, $\varepsilon_{M}$ in \eqref{epsM} concerns only the secondary discretization and is therefore absent whenever the latter is not needed. However, concerning our specific problem, according to \eqref{Phi} and \eqref{PhiMco}, it can be written as
\begin{equation}\label{epsMU}
\varepsilon_{M}:=\left\|\begin{pmatrix}
F_{M}(v^{\ast}_{\cdot}\circ s_{\omega^{\ast}})-F(v^{\ast}_{\cdot}\circ s_{\omega^{\ast}})\\
\omega^*(G_{M}(v^{\ast}_{\cdot}\circ s_{\omega^{\ast}})-G(v^{\ast}_{\cdot}\circ s_{\omega^{\ast}}))
\end{pmatrix}\right\|_{\mathbb{U}}
\end{equation}
and needs to be considered (at least) if the integrals in \eqref{hpcoupled2} cannot be exactly computed. If $\tilde F_M=\tilde F$ and $\tilde G_M=\tilde G$, \eqref{epsMU} is basically a quadrature error. Assuming that $M$ varies proportionally to $L$, one can choose a formula that guarantees at least the same order of the primary consistency error, so that the order of convergence of the final error is in fact the one given by theorem \ref{t_epsL}.

\begin{remark}\label{r_representation}
In principle, one could discretize the problem by choosing, for each mesh interval, a set of \emph{representation} nodes used to interpolate which are independent from the collocation nodes. That would mean that the unknowns of the discrete problem are given by the values of the relevant functions at the representation nodes, while the equations need to be satisfied at the collocation nodes. If $x^r_{L,M}$ is the vector of the unknowns and $Q_L:X_L\to X$ is the prolongation operator corresponding to the representation nodes (while $P_L,R_L$ refer to the collocation ones), the problem actually reads
$R_LQ_Lx^r_{L,M}=R_L\Phi_MQ_Lx^r_{L,M}.$
Thus, the vector $x^*_{L,M}$ given by the values of the relevant function at the collocation nodes is the solution of the discrete fixed point problem, in fact,
$$
x^*_{L,M}=R_LQ_Lx^r_{L,M}=R_L\Phi_MQ_Lx^r_{L,M}=R_L\Phi_MP_LR_LQ_Lx^r_{L,M}=R_L\Phi_MP_Lx^*_{L,M}.
$$
\end{remark}
%
%\begin{remark}\label{r_altRHS}
%The entire convergence analysis can as well be carried out for right-hand sides of the form \eqref{altRHS}. In this case, the different theoretical and numerical assumptions read
%\begin{itemize}
%\item[(T3)] $k:\mathbb{R}\to\mathbb{R}^d$ is measurable;
%\item[(T4)] $f\in\mathcal{C}^1(\mathbb{R}^d,\mathbb{R}^d)$;
%\item[(T5)] there exist $r>0$ and $\kappa\geq0$ such that
%\begin{equation*}
%\setlength\arraycolsep{0.1em}\begin{array}{rcl}
%\displaystyle\Bigg\|f'\left(\omega\int_0^{\frac{\tau}{\omega}}k(\theta)v(t-\theta)\dd\theta\right)&-&\displaystyle f'\left(\omega^*\int_0^{\frac{\tau}{\omega^*}}k(\theta)v^*(t-\theta)\dd\theta)\right)
%\Bigg\|_{\mathbb{R}^{d}}\\[4mm]
%&\leq&\kappa\|(v_t,\omega)-(v^{\ast}_{t}, \omega^{\ast})\|_{X\times\mathbb{R}}
%\end{array}
%\end{equation*}
%for every $(v_t,\omega)\in\overline B((v^{\ast}_{t},\omega^{\ast}),r)$, uniformly with respect to $t\in[0,1]$;
%\item[(N4)] $k\in\mathcal{C}(\mathbb{R},\mathbb{R}^{d})$.
%\end{itemize}
%Moreover, the above can be easily further generalized to the case
%\begin{equation}\label{sirsRHS}
%F(\alpha)=f\left(\int_{0}^{\tau_1}k_1(\sigma)\alpha(-\sigma)\dd\sigma,\ldots,\int_{0}^{\tau_n}k_n(\sigma)\alpha(-\sigma)\dd\sigma\right).
%\end{equation}
%\end{remark}
\section{Numerical tests}\label{s_results}
This section deals with the numerical computation of periodic solutions of some specific equations from the field of population dynamics. {Let us remark that in our implementation (\url{http://cdlab.uniud.it/software}) we adopt Newton's method to solve the finite-dimensional system of nonlinear equations resulting from the fixed-point problem for $\Phi_{L,M}$ in \eqref{eq:philm}.}
%In particular, in Subsection \ref{s_numerical} we will provide experimental proof of the order of convergence \eqref{epsLh}, while in Subsection \ref{s_applications} we will show, by means of two examples, how bifurcations may be detected thanks to the computed periodic solutions.

\bigskip
\noindent The first system that we consider is
\begin{equation}\label{quadbvp}
\left\{\setlength\arraycolsep{0.1em}\begin{array}{rcll}
x(t) &=& \frac{\gamma}{2}\displaystyle\int_{-3}^{-1}x(t+\theta)(1-x(t+\theta))\dd\theta,& \\[1mm]
y'(t) &=& \gamma x(t)\displaystyle\int_{-3}^{-1}x(t+\theta)(1-x(t+\theta))\dd\theta+y(t),&
\end{array}
\right.
\end{equation}
for which the exact expression of the periodic solution between a Hopf bifurcation (at $\gamma=2+\pi/2$) and the first period doubling (at $\gamma\approx 4.327$) is 
\begin{equation}\label{quadbvpsol}
\left\{\setlength\arraycolsep{0.1em}\begin{array}{rcll}
x(t) &=& \sigma+A\sin\left(\frac{\pi t}{2}\right),& \\[1mm]
y(t) &=& -2\left(\sigma+2A\left(\frac{2\sin\left(\frac{\pi t}{2}\right)+\pi\cos\left(\frac{\pi t}{2}\right)}{4+\pi^2}\right)\right),&
\end{array}
\right.
\end{equation}
where $\sigma=1/2+\pi/(4\gamma)$ and $A^2= 2\sigma\left(1-1/\gamma-\sigma\right)$. \eqref{quadbvp} is artificially constructed from an RE studied in \cite{bdls16}, the same as the first of \eqref{quadbvp}, where the corresponding exact solution, the first of \eqref{quadbvpsol}, has been obtained. The RFDE component has been added to obtain again an exact expression for the solution.
The integral representing the distributed delay is approximated through a Clenshaw-Curtis quadrature \cite{tref00} rescaled to the interval $[-3,-1]$.

Starting from the exact solution at $\gamma=4$, the branch of periodic orbits is continued up to the first period doubling after the Hopf bifurcation.
The continuation is performed using a trivial phase condition by forcing $x(0)=\sigma$, and Chebyshev extrema as collocation points. The left panel of Figure \ref{fig:quadratic_errs} confirms the $O(h^{m})$ behavior (being $p=+\infty$) while showing that the error on the RE component converges with order $m+1$ instead. This is not surprising since the system is in fact uncoupled, the right-hand side of the RE component  {being} independent of the RFDE component, and $m+1$ is indeed the expected order of convergence for REs only \cite{ab22RE,adena21}.
\begin{figure}
%\sidecaption[t]
\centering
\includegraphics[scale=0.75]{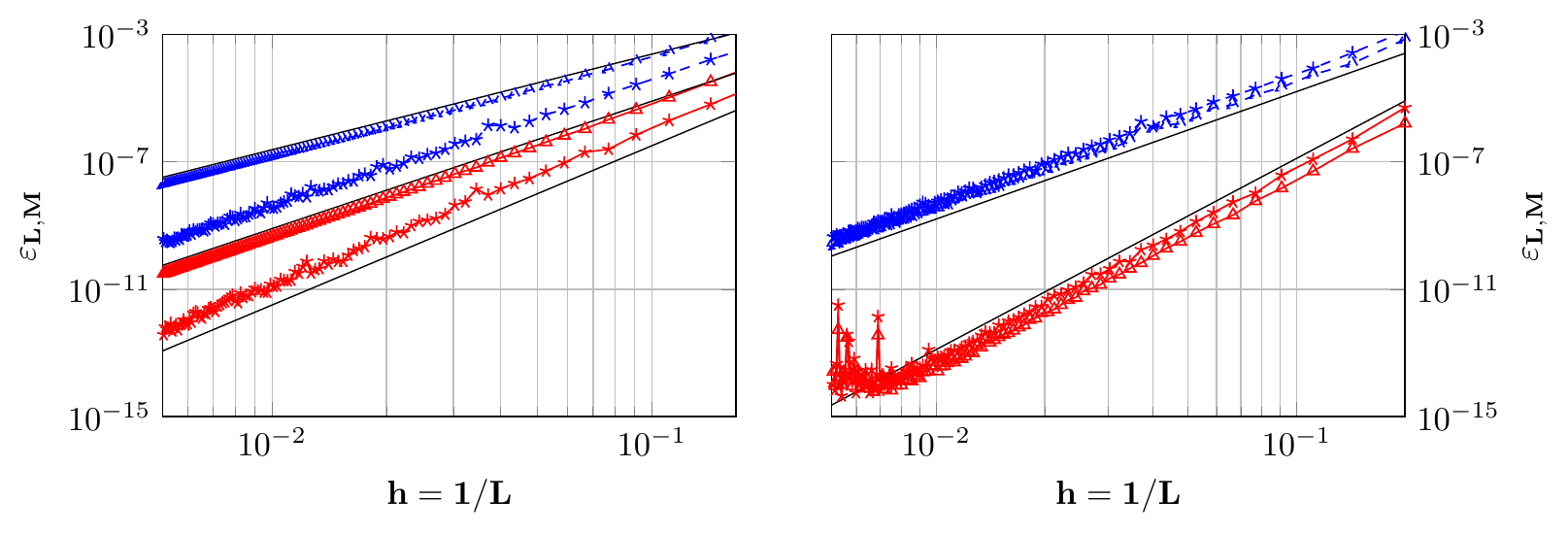}
\caption{Error on the periodic solution of \eqref{quadbvp} at $\gamma=4.327$. Left: $m=3$ (blue lines) and $m=4$ (red lines) using Chebyshev points, with triangles for the RFDE component and stars for the RE component, compared to straight lines having slopes 3, 4 and 5. Right: $m=3$ (blue lines) and $m=5$ (red lines) using Gauss-Legendre points compared to straight lines having slopes 4 and 6.}
\label{fig:quadratic_errs}
\end{figure}
\begin{figure}
%\sidecaption[t]
\centering
\includegraphics[scale=0.75]{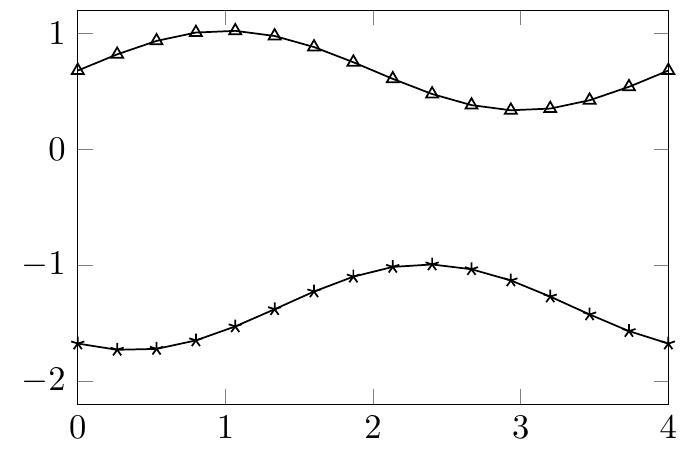}
\caption{Periodic solution of \eqref{quadbvp} at $\gamma=4.327$ using Gauss-Legendre points, $m=3$ and $L=5$, with triangles for the RFDE component and stars for the RE component.}
\label{fig:profile_exact}
\end{figure}
Given the experimental proof, found in \cite{elir00}, that in the case of RFDEs the order of convergence increases from $m$ to $m+1$ when using Gauss-Legendre collocation points, we replicate the above experiment using such collocation points, and the right panel of Figure \ref{fig:quadratic_errs} confirms the $O(h^{m+1})$ behavior for both components. Figure \ref{fig:profile_exact} shows an example of solution profile.

\bigskip
Next, consider the simplified logistic Daphnia model \cite{bdmv13}
\begin{equation}\label{simpledaphnia}
\left\{\setlength\arraycolsep{0.1em}\begin{array}{rcll}
b(t) &=& \beta S(t)\displaystyle\int_{\overline{a}}^{a_{\text{max}}}b(t-a)\dd a,& \\[3mm]
S'(t) &=& r \displaystyle S(t)\left(1-\frac{S(t)}{K}\right)-\gamma S(t)\int_{\overline{a}}^{a_{\text{max}}}b(t-a)\dd a.& 
\end{array}
\right.
\end{equation}
As shown in \cite{bdmv13}, for $r=K=\gamma=1$, $\overline{a}=3$ and $a_{\text{max}}=4$, a Hopf bifurcation occurs when $\beta \approx 3.0162$.

Starting from a periodic solution at $\beta=4$ computed using MatCont \cite{matcont} on the pseudospectral reduction to ODEs of \eqref{simpledaphnia} \cite{bdgsv16}, the branch of periodic orbits is continued up to $\beta=5$. Given the absence of an exact expression of the true solution, unlike the case \eqref{quadbvp}, the error is computed with respect to a reference solution which is in turn computed using $L=1000$ and $m=4$. Figure \ref{fig:prova_SIRS} confirms the $O(h^{m})$ behavior when using Chebyshev collocation nodes and the $O(h^{m+1})$ behavior when using the Gauss-Legendre nodes. Note that, unlike \eqref{quadbvp}, \eqref{simpledaphnia} is actually coupled, thus the order $m$ in the former case holds for both components. Figure \ref{fig:profile_daphnia} shows an example of solution profile.

\begin{figure}
%\sidecaption[t]
\centering
\includegraphics[scale=0.75]{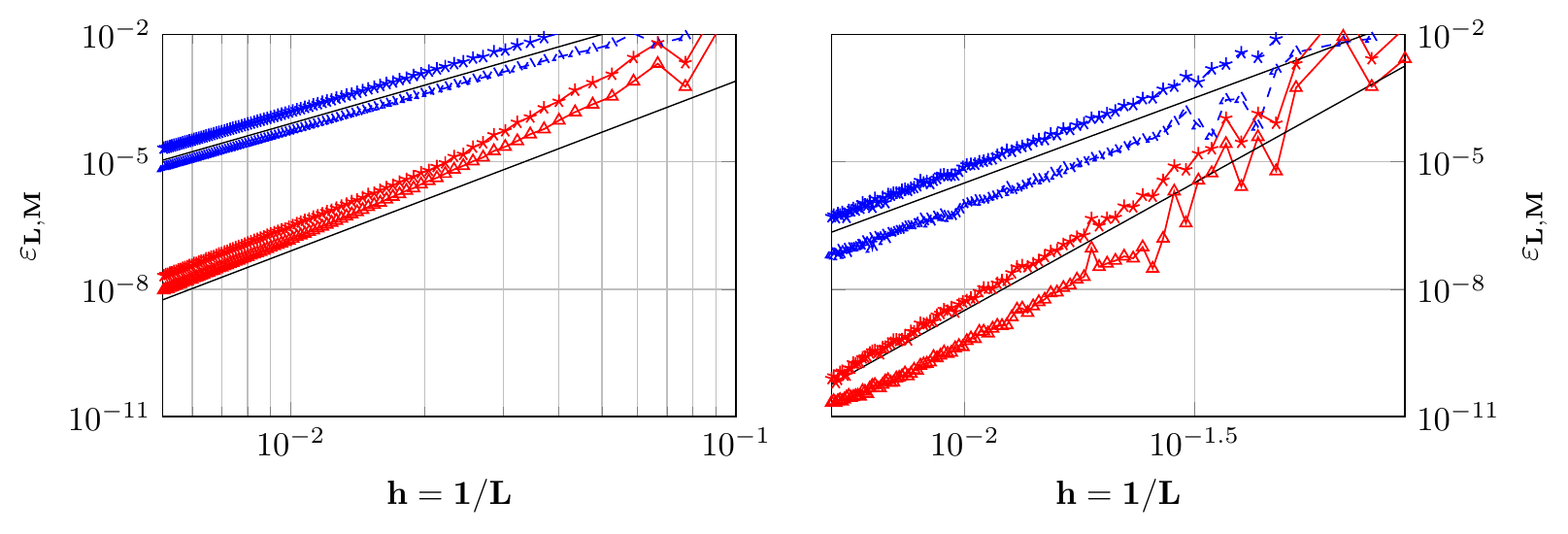}
\caption{Error on the periodic solution of \eqref{simpledaphnia} at $\beta=5$. Left: $m=3$ (blue lines) and $m=4$ (red lines) using Chebyshev points, with triangles for the RFDE component and stars for the RE component, compared to straight lines having slopes 3 and 4. Right: $m=3$ (blue lines) and $m=5$ (red lines) using Gauss-Legendre points compared to straight lines having slopes 4 and 6.}
\label{fig:prova_SIRS}
\end{figure}
\begin{figure}
%\sidecaption[t]
\centering
\includegraphics[scale=0.75]{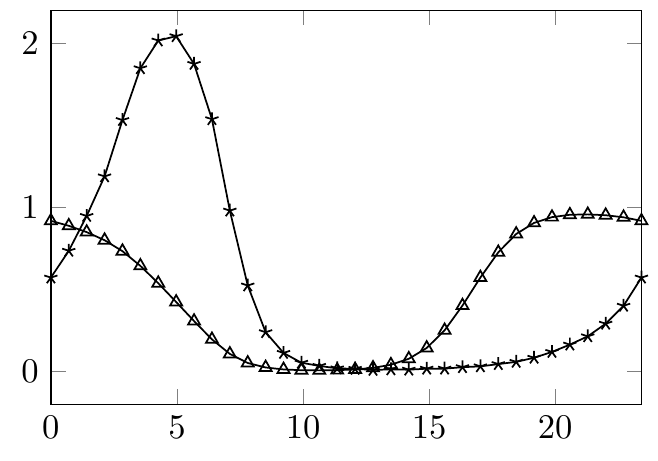}
\caption{Periodic solution of \eqref{simpledaphnia} at $\beta=5$ using Gauss-Legendre points, $m=3$ and $L=11$, with triangles for the RFDE component and stars for the RE component.}
\label{fig:profile_daphnia}
\end{figure}

\bigskip
Lastly, we consider the coupled system from the Plant neural model \cite{plant}
\begin{equation}\label{plant}
\left\{\setlength\arraycolsep{0.1em}\begin{array}{rcll}
v'(t) &=& v(t)-\displaystyle\frac{v(t)^3}{3}-w(t)+\mu(v(t-\tau)-v_0),& \\[1mm]
w(t) &=& w(t-\tau) +\displaystyle \int_{t-\tau}^{t}r(v(s)+a-bw(s))\dd s,& 
\end{array}
\right.
\end{equation}
where $v_0$ is a real root of $\displaystyle v-v^3/3-(v+a)/b$.
\eqref{plant} is derived by the original model in \cite{plant} by integrating the second RFDE. Observe that the second of \eqref{plant} is a \emph{neutral} RE, in that it does not satisfy \eqref{hpcoupled1}-\eqref{hpcoupled2} and is therefore not amenable of the analysis in this paper, but this does not impede to perform some numerical experiments to test its convergence behavior. Indeed, although the method seem to converge for several values of $L$ and $m$, the order of convergence is not as clear as in the previous test cases. Figure \ref{fig:profile_plant} shows an example of solution profile. Computing periodic solutions of neutral renewal equations (and relevant coupled systems) will be the subject of further study. As for the computation of relevant multipliers, see the recent work \cite{blvl23}.
\begin{figure}
%\sidecaption[t]
\centering
\includegraphics[scale=0.75]{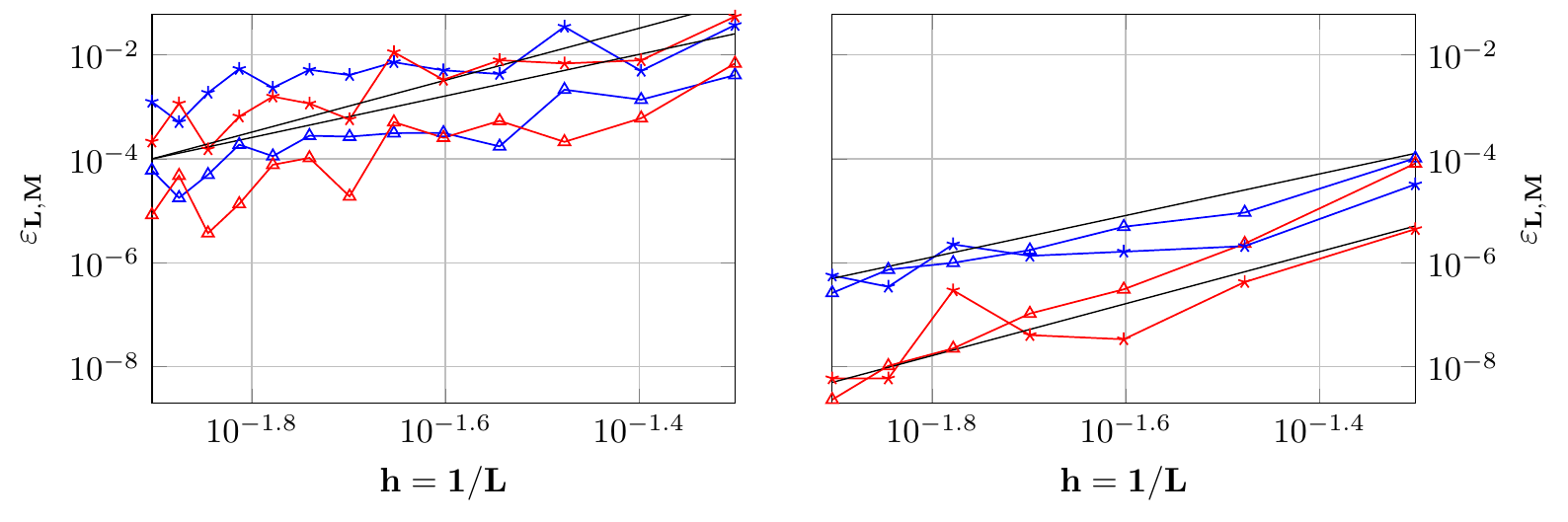}
\caption{Error on the periodic solution of \eqref{plant} using Gauss-Legendre points, using $m=4$ (blue lines) and $m=5$ (red lines), with triangles for the RFDE component and stars for the RE component, compared to straight lines having slopes 4 and 5. Left: $\tau=5$. Right: $\tau=2$.}
\label{fig:prova_C2}
\end{figure}
\begin{figure}
%\sidecaption[t]
\centering
\includegraphics[scale=0.75]{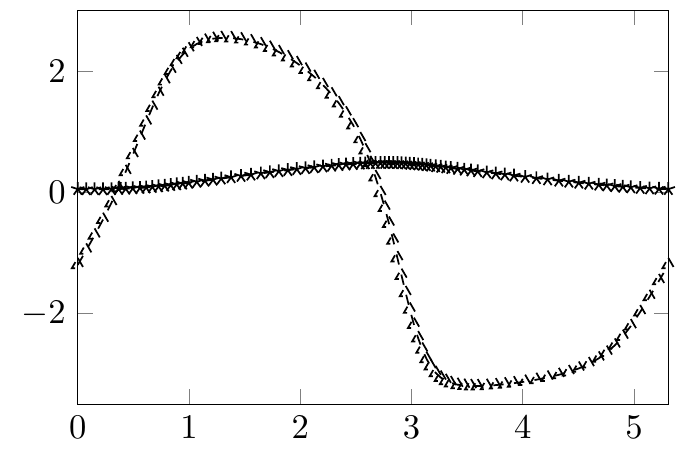}
\caption{Periodic solution of \eqref{plant} at $\tau=2$ using Gauss-Legendre points, $m=4$ and $L=20$, with triangles for the RFDE component and stars for the RE component.}
\label{fig:profile_plant}
\end{figure}
%Starting from a perturbation of the equilibrium at the Hopf bifurcation point, the branch of periodic orbits is continued up to $\gamma=6$. The integrals representing the distributed delays are again approximated through Clenshaw-Curtis quadrature, and the experiment is again performed using both Chebyshev and Gauss-Legendre collocation points. The error is computed with respect to a reference solution which is in turn computed using $L=400$ and $m=4$. Being the values for $m$ considered ($3$ and $4$) greater than $p=2$, the left panel of Figure \ref{fig:prova_C2} confirms the $O(h^{p+1})$ behavior in the former case, as the right one does in the latter.

All the previous results concerned equations where $m<p+1$ (in fact, all the kernels were analytic). A case of an RE with $m>p$ has been already considered in \cite[(4.3)]{ab22RE}, see the relevant comments therein.

\section*{Acknowledgments}
The authors are members of INdAM Research group GNCS. Dimitri Breda is member of the UMI Research group “Modellistica socio-epidemiologica” and his work was partially supported by the Italian Ministry of University and Research (MUR) through the PRIN 2020 project (No. 2020JLWP23) “Integrated Mathematical Approaches to Socio-Epidemiological Dynamics” (CUP: E15F21005420006).
\appendix

\section{}
\label{s_appendix}

This appendix completes the proof of Proposition \ref{p_Ax*2co} by showing
that $\xi_1^*$ cannot be in $R$, following the notation introduced therein. To this aim, we first reformulate the linearization of both equations in \eqref{convBVPcoupled} as Volterra Integral Equations (VIEs)
with measure kernels {(\ref{reformulation})}, for the theory of which see \cite[Chapter 10]{grip09}. Then, we introduce the relevant adjoint equations {(\ref{adjeq})} and monodromy operators {(\ref{monop})}, showing that the latter are adjoint with respect to a suitable bilinear form. From this point on, the proof proceeds {similarly as for the cases of RFDEs and REs separately, dealt with, respectively, in \cite{ab20mas} and \cite{ab22RE}}.

\subsection{{Reformulation as VIE with measure kernel}}\label{reformulation}
Under the assumptions $\omega\geq\tau$, \eqref{hpcoupled1} and \eqref{hpcoupled2}, the linearization of the equations in \eqref{convBVPcoupled} around the periodic solution $(x^*, y^*, \omega^*)$ reads
\begin{equation}\label{BVPlinearized}
\left\{\setlength\arraycolsep{0.1em}\begin{array}{rcll}
x(t) &=& \mathfrak{L}^*_X(t)[(x_t, y_t)\circ s_{\hat\omega}]+\omega\mathfrak{M}^*_X(t),&\quad t\in[0,1], \\[1mm]
y'(t) &=& \mathfrak{L}^*_Y(t)[(x_t, y_t)\circ s_{\hat\omega}]+\omega\mathfrak{M}^*_Y(t),&\quad t\in[0,1], \\[1mm]
\end{array}
\right.
\end{equation}
where $\mathfrak{L}_X$, $\mathfrak{M}_X$, $\mathfrak{L}_Y$ and $\mathfrak{M}_Y$ have been defined in \eqref{Lco1}, \eqref{Mco1}, \eqref{Lco2} and \eqref{Mco2}, respectively.
Due to the second of \eqref{hpcoupled2}, since $D_2\tilde G$ is continuous, the homogeneous version of the RFDE in \eqref{BVPlinearized} can be written in the form
\begin{equation}\label{RFDElinearized}
y'(t)=\int_{-r}^0h(t,\theta)(x(t+\theta),y(t+\theta))\dd\theta+\int_{-r}^0\dd_{\theta}\eta(t,\theta)y(t+\theta),
\end{equation}
where $r:=\tau/\omega\leq 1$, $\eta(t,\cdot):[-r,0]\to\R^{d\times d}$ is NBV for all $t\in\R$, $h(t,\cdot):[-r,0]\to\R^{d\times d}$ is measurable for all $t\in\R$,  $h(\cdot,\sigma)$ and $\eta(\cdot,\sigma)$ are $1$-periodic for almost all $\sigma\in[-r,0]$.
Integrating \eqref{RFDElinearized} one gets
\begin{equation*}%\label{RFDEintegrated}
\setlength\arraycolsep{0.1em}\begin{array}{rcl}
y(t)&=y(t_0)+&\displaystyle\int_{t_0}^t \left[\int_{s-r}^s h(s,\theta-s)(x(\theta),y(\theta))\dd\theta+\int_{s-r}^s\dd_{\theta}\eta(s,\theta-s)y(\theta)\right]\dd s.
\end{array}
\end{equation*}
By changing the order of integration, we get the VIE
\begin{equation}\label{VIEY}
y(t)=\displaystyle\int_{t_0}^tC(t,\theta-t,\dd\theta)(x(\theta), y(\theta))+g(t),
\end{equation}
with $$
g(t):=\displaystyle\int_{t-1}^{t_0}C(t,\theta-t,\dd\theta)(x(\theta), y(\theta))+y(t_0),
$$
where
$$
C(t,\theta-t,\dd\theta)(x(\theta), y(\theta)):=\begin{cases}\displaystyle\int_{\max\{t_0-t,\theta-t\}}^{0}[h(s+t,\theta-t-s)(x(\theta),y(\theta))\dd\theta\\
\qquad\qquad+\dd_{\theta}\eta(s+t,\theta-t-s)y(\theta)]\dd s,\\
\qquad\qquad \qquad t\geq t_0, \theta\in[t-r,t],\\
0,\qquad\qquad\qquad\textrm{otherwise}.
\end{cases}
$$
Similarly to the RFDE, the homogeneous version of the RE in \eqref{BVPlinearized} can be expressed through the Riemann-Stjeltis integral
$$
x(t)=\int_{-r}^0k(t,\theta)(x(t+\theta),y(t+\theta))\dd\theta+\int_{-r}^0\dd_{\theta}\chi(t,\theta)y(t+\theta)
$$
where $\chi(t,\cdot):[-r,0]\to\R^{d\times d}$ is NBV for all $t\in\R$, $k(t,\cdot):[-r,0]\to\R^{d\times d}$ is measurable for all $t\in\R$, $k(\cdot,\sigma)$ and $\chi(\cdot,\sigma):\R\to\R^{d\times d}$ are $1$-periodic for all $\sigma\in[-r,0]$.

Thus, it can in turn be expressed a VIE with measure kernel (see \cite[Chapter 10]{grip09}) as
\begin{equation}\label{VIEX}
x(t)=\displaystyle\int_{t_0}^tK(t,\theta-t,\dd\theta)(x(\theta), y(\theta))+f(t),
\end{equation}
where 
$$
K(t,\theta-t,\dd\theta)(x(\theta), y(\theta)):=\begin{cases}
k(t,\theta-t)(x(\theta),y(\theta))\dd\theta\\
\,\qquad +\dd_{\theta}\chi(t,\theta-t)y(\theta),&t\geq t_0, \theta\in[t-r,t],\\
0,&\textrm{otherwise},
\end{cases}
$$
and
$$
f(t):=\displaystyle\int_{t-1}^{t_0}K(t,\theta-t,\dd\theta)(x(\theta), y(\theta)).
$$

Eventually, both \eqref{VIEY} and \eqref{VIEX} can be written as a single integral equation with measure kernel defined in $\R^{2 d}$, viz.
\begin{equation}\label{VIEcoupled}
z(t)=\displaystyle\int_{t_0}^tW(t,\theta-t,\dd\theta)z(\theta)+j(t).
\end{equation}
Consider the VIE with measure kernel \eqref{VIEcoupled}. By \cite[Chapter 10, Corollary 2.9]{grip09}, if $I:=\R$ can be finitely partitioned into subintervals $\{I_i\}_{i\leq n}$ such that
\begin{equation}\label{pwleq1}
\sup_{t\in I_i}|W|(t,[-r,0],I_i)\leq 1,\qquad i\leq n,
\end{equation}
then $W$ has a resolvent of type $B^{\infty}$ on $I$, i.e., there exists a measure kernel $R$ of type $B^{\infty}$ such that
\begin{equation*}%\label{resolvent}
\setlength\arraycolsep{0.1em}\begin{array}{rcl}
W(s,[-r,0],E)&=&\displaystyle R(s,[-r,0],E)+\int_{t_0}^{t}R(s,\sigma-s,\dd\sigma)W(\sigma,[-r,0],E)\\[4mm]
&=&\displaystyle R(s,[-r,0],E)+\int_{t_0}^{t}W(s,\sigma-s,\dd\sigma)R(\sigma,[-r,0],E),\quad s\in I,
\end{array}
\end{equation*}
for $E\subset I$ Borel. Note that \eqref{pwleq1} holds if, for instance, $W$ is of bounded total variation. Indeed this implies, by definition, that there is at most a finite number of points $x_0,\ldots,x_n$ such that any subinterval $I'\subset I$ containing one of such points satisfies $\sup_{t\in I'}|W|(t,[-r,0],I')> 1$. Consider the partition of $I$ given by $(-\infty,x_0]$, $[x_i,x_{i+1}]$ for $i=0,\ldots,n$ and $[x_n,+\infty)$. By construction, such partition  can be further refined in order to satisfy \eqref{pwleq1}.
 
\subsection{{Adjoint equations}}\label{adjeq}
Define the adjoint VIE with measure kernel \cite[Chapter 10]{grip09} 
\begin{equation}\label{VIEadjoint}
\zeta(s)=\int_{s}^{s_{0}}\zeta(\sigma)W(s,\sigma-s,\dd\sigma)+\gamma(s),\quad s\leq s_{0},
\end{equation}
with
\begin{equation}\label{gamma_s}
\gamma(s):=\int_{s_{0}}^{s+1}\psi_{0}(\sigma)W(s,\sigma-s,\dd\sigma),\quad s\leq s_{0}
\end{equation}
for
\begin{equation*}%\label{psi0}
\psi_{0}(s_{0}+\eta):=\begin{cases}
\psi(\eta),&\eta\in[0,1],\\
0,&\textrm{otherwise},
\end{cases}
\end{equation*}
and $\zeta_{s_{0}}=\psi$ for some $\psi\in Z^*:=L^{\infty}([0,1],\R^d)\times Y^*$.
Existence and uniqueness \cite[Chapter 10, Theorem 2.5]{grip09} allows to define the forward evolution family $\{T(t,t_{0})\}_{t\geq t_{0}}$ on $Z:=X\times Y$ through $T(t,t_{0})z_{t_0}=z_{t}$ and that of its adjoint $\{V(s,s_0)\}_{s\leq s_0}$ on $Z^*$ through $V(s,s_0)\zeta_{s_0}=\zeta_s$. Moreover, we can express the solution of \eqref{VIEcoupled} and that of its adjoint \eqref{VIEadjoint} respectively as
\begin{equation*}
z(t)=j(t)+\int_{t_{0}}^{t}R(t,\sigma-t,\dd\sigma)j(\sigma)\quad t\geq t_{0},
\end{equation*}
and
\begin{equation*}
\zeta(s)=\gamma(s)+\int_{s}^{s_{0}}\gamma(\sigma)R(\sigma,s-\sigma,\dd\sigma),\quad s\leq s_{0}.
\end{equation*}
%%-----------------------------------------------------------------------------
%%-----------------------------------------------------------------------------

Given $t\in\mathbb{R}$, consider the pairing $[\cdot,\cdot]_{t}:Z^*\times Z\to\mathbb{R}$ defined as
\begin{equation}\label{bilinear2}
[\psi,\varphi]_{t}:=\int_{0}^{1}\psi(\eta)\int_{-1}^{0}W(t+\eta,\beta-\eta,\dd\beta)\varphi(\beta)\dd\eta.
\end{equation}

Observe that such bilinear form is nondegenerate for all $t\in\R$ whenever $W$ is nontrivial. Indeed, assume by contradiction that there exists $\psi\in Z^*$ such that $\psi$ is nonzero but $[\psi,\cdot]_t$ is constantly 0. By the nondegenerateness of the standard bilinear form, this means that the innermost integral is 0 for all $\varphi\in Z$ and almost all $\eta\in[0,1]$. If $\varphi:=z_t$, where $z$ is the (unique modulo multiplication by constant) periodic solution of the VIE, then such integral is equal to
\begin{equation*}
\int_{-1}^0W(t+\eta,\beta-\eta,\dd\beta)z(t+\beta)=\int_{t-1}^tW(t+\eta,\beta-t-\eta,\dd\beta)z(\beta)=z(t+\eta).
\end{equation*}
Thus $z_{t+1}$ is almost everywhere equal to 0. Using periodicity, this means that $z$ is almost everywhere 0, which is only possible if $W$ is trivial, contradiction. Using similar arguments one can prove that there is no nonzero $\varphi\in Z$ such that $[\cdot,\varphi]_t$ is constantly zero, after exchanging the order of integration in the definition of $[\cdot,\cdot]_t$.

\subsection{{Adjoint monodromy operators}}\label{monop}
We claim that the forward monodromy operator and the corresponding
backward one are adjoint w.r.t. \eqref{bilinear2}, i.e., that
\begin{equation}\label{VT2}
[V(t-1,t)\psi,\varphi]_{t}=[\psi,T(t+1,t)\varphi]_{t}.
\end{equation}

We have
\begin{equation*}
\setlength\arraycolsep{0.1em}\begin{array}{rcl}
[V(t-1,t)\psi,\varphi]_{t}&=&\displaystyle\int_{0}^{1}[V(t-1,t)\psi](\eta)\int_{-1}^{0}W(t+\eta,\beta-\eta,\dd\beta)\varphi(\beta)\dd\eta\\[4mm]
&=&\displaystyle\int_{0}^{1}\zeta(t-1+\eta;t,\psi)\int_{-1}^{0}W(t+\eta,\beta-\eta,\dd\beta)\varphi(\beta)\dd\eta\\[4mm]

%\end{array}
%\end{equation*}
%\begin{equation*}
%\setlength\arraycolsep{0.1em}\begin{array}{rcl}
&=&\displaystyle\int_{0}^{1}\gamma(t-1+\eta)\int_{-1}^{0}W(t+\eta,\beta-\eta,\dd\beta)\varphi(\beta)\dd\eta\\[4mm]
&+&\displaystyle\int_{0}^{1}\int_{t-1+\eta}^{t}\gamma(\sigma)R(\sigma,t-1+\eta-\sigma,\dd\sigma)\\[4mm]
&&\displaystyle\int_{-1}^{0}W(t+\eta,\beta-\eta,\dd\beta)\varphi(\beta)\dd\eta
\\[4mm]
&=&A+B
\end{array}
\end{equation*}
for
\begin{equation*}
A:=\displaystyle\int_{0}^{1}\gamma(t-1+\eta)\int_{-1}^{0}W(t+\eta,\beta-\eta,\dd\beta)\varphi(\beta)\dd\eta
\end{equation*}
and
\begin{equation*}
B:=\displaystyle\int_{0}^{1}\int_{t-1+\eta}^{t}\gamma(\sigma)R(\sigma,t-1+\eta-\sigma,\dd\sigma)\int_{-1}^{0}W(t+\eta,\beta-\eta,\dd\beta)\varphi(\beta)\dd\eta.
\end{equation*}

As for $A$, we have
\begin{equation*}
\setlength\arraycolsep{0.1em}\begin{array}{rcl}
A&=&\displaystyle\int_{0}^{1}\int_{t}^{t+\eta}\psi_{0}(\sigma)W(\sigma,t-1+\eta-\sigma,\dd\sigma)\int_{-1}^{0}W(t+\eta,\beta-\eta,\dd\beta)\varphi(\beta)\dd\eta\\[4mm]
&=&\displaystyle\int_{0}^{1}\int_{0}^{\eta}\psi(\sigma)W(\sigma+t,\eta-1-\sigma,\dd\sigma)\int_{-1}^{0}W(t+\eta,\beta-\eta,\dd\beta)\varphi(\beta)\dd\eta\\[4mm]

&=&\displaystyle\int_{0}^{1}\int_{0}^{1}\psi(\sigma)W(\sigma+t,\eta-1-\sigma,\dd\sigma)\int_{-1}^{0}W(t+\eta,\beta-\eta,\dd\beta)\varphi(\beta)\dd\eta\\[4mm]
%\end{array}
%\end{equation*}
%\begin{equation*}
%\setlength\arraycolsep{0.1em}\begin{array}{rcl}

&=&\displaystyle\int_{0}^{1}\psi(\sigma)\int_{0}^{1}W(\sigma+t,\eta-1-\sigma,\dd\sigma)\int_{-1}^{0}W(t+\eta,\beta-\eta,\dd\beta)\varphi(\beta)\dd\eta\\[4mm]
&=&\displaystyle\int_{0}^{1}\psi(\sigma)\int_{-1}^{0}\int_{0}^{1}W(\sigma+t,\eta-1-\sigma,\dd\sigma)W(t+\eta,\beta-\eta,\dd\beta)\varphi(\beta)\dd\eta\\[4mm]
&=&\displaystyle\int_{0}^{1}\psi(\sigma)\int_{-1}^{0}\int_{0}^{1}W(\sigma+t,\eta-1-\sigma,\dd\sigma)W(t+\eta,\beta-\eta,\dd\beta)\varphi(\beta)\dd\eta\\[4mm]
%\end{array}
%\end{equation*}
%\begin{equation*}
%\setlength\arraycolsep{0.1em}\begin{array}{rcl}
&=&\displaystyle\int_{0}^{1}\psi(\sigma)\int_{-1}^{0}\int_{0}^{1}W(\sigma+t,\eta-1-\sigma,\dd\sigma)W(t+\eta,\beta-\eta,\dd\beta)\varphi(\beta)\dd\eta\\[4mm]
&=&\displaystyle\int_{0}^{1}\psi(\sigma)\int_{-1}^{0}\int_{-1}^{0}W(\sigma+t,\eta-\sigma,\dd\sigma)W(t+\eta+1,\beta-\eta-1,\dd\beta)\varphi(\beta)\dd\eta\\[4mm]
&=&\displaystyle\int_{0}^{1}\psi(\sigma)\int_{-1}^{0}\int_{-1}^{\beta}W(\sigma+t,\eta-\sigma,\dd\sigma)W(t+\eta+1,\beta-\eta-1,\dd\beta)\varphi(\beta)\dd\eta\\[4mm]
&=&\displaystyle\int_{0}^{1}\psi(\sigma)\int_{-1}^{0}W(\sigma+t,\eta-\sigma,\dd\sigma)\int_{\eta}^{0}W(t+\eta+1,\beta-\eta-1,\dd\beta)\varphi(\beta)\dd\eta\\[4mm]
&=&\displaystyle\int_{0}^{1}\psi(\sigma)\int_{-1}^{0}W(\sigma+t,\eta-\sigma,\dd\sigma)\\[4mm]
&&\displaystyle\int_{t+\eta}^{t}W(t+\eta+1,\beta-\eta-t-1,\dd\beta)\varphi_0(\beta)\dd\eta\\[4mm]
&=&\displaystyle\int_{0}^{1}\psi(\sigma)\int_{-1}^{0}W(\sigma+t,\eta-\sigma,\dd\sigma)j(t+1+\eta)\dd\eta\\[4mm]
\end{array}
\end{equation*}
where the first equality comes from \eqref{gamma_s} by setting $s_0=t$ and $s=t-1+\eta$, the second equality follows from the substitution $\sigma\leftarrow t+\sigma$, the third from the fact that $W(\sigma+t,\eta-1-\sigma,\dd\sigma)$ vanishes for $\eta<\sigma$, the fourth by exchanging the order of integration between $\eta$ and $\sigma$, the fifth by exchanging the order of integration between $\beta$ and $\sigma$, the seventh from the substitution $\eta\leftarrow\eta+1$, the eight from the fact that $W(t+\eta+1,\beta-\eta-1,\dd\beta)$ vanishes for $\eta>\beta$, the ninth by exchanging the order of integration between $\beta$ and $\eta$, the tenth from the substitution $\beta\leftarrow\beta-t$ and the last one from the definition of $j$ by setting $t_0=t$.
As for $B$, we have
\begin{equation*}
\setlength\arraycolsep{0.1em}\begin{array}{rcl}
B&=&\displaystyle\int_{0}^{1}\int_{t-1+\eta}^{t}\gamma(\sigma)R(\sigma,t-1+\eta-\sigma,\dd\sigma)\int_{-1}^{0}W(t+\eta,\beta-\eta,\dd\beta)\varphi(\beta)\dd\eta\\[4mm]
&=&\displaystyle\int_{0}^{1}\int_{t-1+\eta}^{t}\int_t^{\sigma+1}\psi_0(\theta)W(\theta,\sigma-\theta,\dd\theta)R(\sigma,t-1+\eta-\sigma,\dd\sigma)\\[4mm]
&&\displaystyle\int_{-1}^{0}W(t+\eta,\beta-\eta,\dd\beta)\varphi(\beta)\dd\eta\\[4mm]

&=&\displaystyle\int_{0}^{1}\int_{t-1+\eta}^{t}\int_0^{\sigma+1-t}\psi(\theta)W(t+\theta,\sigma-t-\theta,\dd\theta)R(\sigma,t-1+\eta-\sigma,\dd\sigma)\\[4mm]
&&\displaystyle\int_{-1}^{0}W(t+\eta,\beta-\eta,\dd\beta)\varphi(\beta)\dd\eta\\[4mm]
&=&\displaystyle\int_{0}^{1}\int_{0}^{1}\psi(\theta)\int_{t-1+\max\{\eta,\theta\}}^{t}W(t+\theta,\sigma-t-\theta,\dd\theta)R(\sigma,t-1+\eta-\sigma,\dd\sigma)\\[4mm]
&&\displaystyle\int_{-1}^{0}W(t+\eta,\beta-\eta,\dd\beta)\varphi(\beta)\dd\eta\\[4mm]

&=&\displaystyle\int_{0}^{1}\int_{0}^{1}\psi(\theta)\int_{t-1}^{t}W(t+\theta,\sigma-t-\theta,\dd\theta)R(\sigma,t-1+\eta-\sigma,\dd\sigma)\\[4mm]
&&\displaystyle\int_{-1}^{0}W(t+\eta,\beta-\eta,\dd\beta)\varphi(\beta)\dd\eta\\[4mm]

&=&\displaystyle\int_{0}^{1}\psi(\theta)\int_{t-1}^{t}W(t+\theta,\sigma-t-\theta,\dd\theta)\int_{0}^{1}R(\sigma,t-1+\eta-\sigma,\dd\sigma)\\[4mm]
&&\displaystyle\int_{-1}^{0}W(t+\eta,\beta-\eta,\dd\beta)\varphi(\beta)\dd\eta\\[4mm]
&=&\displaystyle\int_{0}^{1}\psi(\theta)\int_{t-1}^{t}W(t+\theta,\sigma-t-\theta,\dd\theta)\int_{0}^{1}R(\sigma+1,t-1+\eta-\sigma,\dd\sigma)\\[4mm]

&&\displaystyle\int_{-1}^{0}W(t+\eta,\beta-\eta,\dd\beta)\varphi(\beta)\dd\eta\\[4mm]
&=&\displaystyle\int_{0}^{1}\psi(\theta)\int_{t-1}^{t}W(t+\theta,\sigma-t-\theta,\dd\theta)\int_{t}^{t+1}R(\sigma+1,\eta-1-\sigma,\dd\sigma)\\[4mm]
&&\displaystyle\int_{-1}^{0}W(\eta,t+\beta-\eta,\dd\beta)\varphi(\beta)\dd\eta\\[4mm]

&=&\displaystyle\int_{0}^{1}\psi(\theta)\int_{t-1}^{t}W(t+\theta,\sigma-t-\theta,\dd\theta)\int_{t}^{\sigma+1}R(\sigma+1,\eta-1-\sigma,\dd\sigma)\\[4mm]
&&\displaystyle\int_{\eta-t-1}^{0}W(\eta,t+\beta-\eta,\dd\beta)\varphi(\beta)\dd\eta\\[4mm]
\end{array}
\end{equation*}
\begin{equation*}
\setlength\arraycolsep{0.1em}\begin{array}{rcl}

&=&\displaystyle\int_{0}^{1}\psi(\theta)\int_{-1}^{0}W(t+\theta,\sigma-\theta,\dd\theta)\int_{t}^{\sigma+t+1}R(\sigma+t+1,\eta-1-\sigma-t,\dd\sigma)\\[4mm]
&&\displaystyle\int_{\eta-1}^{t}W(\eta,\beta-\eta,\dd\beta)\varphi_0(\beta)\dd\eta\\[4mm]

&=&\displaystyle\int_{0}^{1}\psi(\theta)\int_{-1}^{0}W(t+\theta,\sigma-\theta,\dd\theta)\\[4mm]
&&\displaystyle\int_{t}^{\sigma+t+1}R(\sigma+t+1,\eta-1-\sigma-t,\dd\sigma)j(\eta)\dd\eta\\[4mm]
%\end{array}
%\end{equation*}
%\begin{equation*}
%\setlength\arraycolsep{0.1em}\begin{array}{rcl}

&=&\displaystyle\int_{0}^{1}\psi(\sigma)\int_{-1}^{0}W(t+\sigma,\eta-\sigma,\dd\sigma)\\[4mm]
&&\displaystyle\int_{t}^{\eta+t+1}R(\eta+t+1,\theta-1-\eta-t,\dd\eta)j(\theta)\dd\theta,
\end{array}
\end{equation*}
where the first equality comes from \eqref{gamma_s}, the second from the substitution $\theta\leftarrow t+\theta$, the third by exchanging the order of integration between $\theta$ and $\sigma$, the fourth from the fact that $R(\sigma,t-1+\eta-\sigma,\dd\sigma)$ vanishes for $\sigma<t-1+\eta$ and $W(t+\theta,\sigma-t-\theta,\dd\theta)$ vanishes for $\sigma<t-1+\theta$, the fifth by permuting the order of integration of $\eta$, $\sigma$ and $\theta$, the sixth from the $1$-periodicity of $R$ with respect to its first argument, the seventh from the substitution $\eta\leftarrow\eta-t$, the eighth from the fact that $R(\sigma+1,t-1+\eta-\sigma,\dd\sigma)$ vanishes for $\eta>1+\sigma$ and $W(t+\theta,\sigma-t-\theta,\dd\theta)$ vanishes for $\beta<\eta-t-1$, the ninth from the substitutions $\sigma\leftarrow\sigma+t$ and $\beta\leftarrow\beta-t$, the tenth from the definition of $j$ and the last one by renaming the variables.
Eventually, we get
\begin{equation*}
\setlength\arraycolsep{0.1em}\begin{array}{rcl}
A+B&=&\displaystyle\int_{0}^{1}\psi(\sigma)\int_{-1}^{0}W(\sigma+t,\eta-\sigma,\dd\sigma)j(t+1+\eta)\dd\eta\\[4mm]
+&&\displaystyle\int_{0}^{1}\psi(\sigma)\displaystyle\int_{-1}^{0}W(t+\sigma,\eta-\sigma,\dd\sigma)\\[4mm]
&&\displaystyle\int_{t}^{\eta+t+1}R(\eta+t+1,\theta-1-\eta-t,\dd\eta)j(\theta)\dd\theta\\[4mm]
%\end{array}
%\end{equation*}
%\begin{equation*}
%\setlength\arraycolsep{0.1em}\begin{array}{rcl}
&=&\displaystyle\int_{0}^{1}\psi(\sigma)\displaystyle\int_{-1}^{0}W(\sigma+t,\eta-\sigma,\dd\sigma)z(t+1+\eta;t,\varphi)\dd\eta\\[4mm]
&=&\displaystyle\int_{0}^{1}\psi(\sigma)\displaystyle\int_{-1}^{0}W(\sigma+t,\eta-\sigma,\dd\sigma)[T(t+1,t)\varphi](\eta)\dd\eta\\[4mm]
&=&[\psi,T(t+1,t)\varphi]_t,
\end{array}
\end{equation*}
which proves \eqref{VT2}. As anticipated, the proof is then concluded by following the last steps of{ the corresponding proofs in \cite{ab20mas,ab22RE}}.

\bibliographystyle{abbrv}
%\bibliography{andobib}% common bib file
%% if required, the content of .bbl file can be included here once bbl is generated
%%\input sn-article.bbl

%% Default %%
%%\input sn-sample-bib.tex%

\end{document}